\title{ \sc  on classification of continuous first order theories}
\date{\today}
\author{Karim Khanaki\thanks{Partially supported by IPM grant 1403030025}\\Arak University of Technology, and \\Institute for Research in Fundamental Sciences  (IPM)}
\newtheorem{Theorem}{Theorem}[section]
\newtheorem{Convention}[Theorem]{Convention}
\newtheorem{Proposition}[Theorem]{Proposition}
\newtheorem{Definition}[Theorem]{Definition}
\newtheorem{Remark}[Theorem]{Remark}
\newtheorem{Lemma}[Theorem]{Lemma}
\newtheorem{Corollary}[Theorem]{Corollary}
\newtheorem{Fact}[Theorem]{Fact}
\def\dotminus{\mathbin{\ooalign{\hss\raise1ex\hbox{.}\hss\cr
  \mathsurround=0pt$-$}}}
\begin{document}
\maketitle

\begin{abstract} 
We present several new characterizations of $IP$ (the independence property) and $SOP$ (the strict order property) for continuous first-order logic and explore their connections to functional analysis and Banach space theory. Furthermore, we propose new dividing lines for unstable theories by examining subclasses of Baire-1 functions. We also explain why one should not expect a perfect analogue of Shelah's theorem—namely, that a theory is unstable if and only if it has $IP$ or $SOP$—to hold for real-valued logics, particularly in the context of continuous logic.
\end{abstract}

\section{Introduction} \label{1}

In 1971, Saharon Shelah proved the following pivotal theorem in \cite{Sh}:

\medskip\noindent
Shelah's Theorem\footnote{In this article, any reference to Shelah's theorem refers to this result (for classical logic).}:
(\cite[Theorem~II.4.7]{Shelah})
A complete first-order theory has the order property ($OP$) if and only if it has the independence property ($IP$) or the strict order property ($SOP$).

\medskip\noindent
This theorem is of great importance for  Shelah's classification of (classical) first-order theories and serves as the focal point of both \cite{K-Baire} and the present paper. In some articles, it is claimed that a straightforward translation of the proof of Shelah's theorem for classical first-order logic could provide a proof of this theorem for continuous logic as developed in  \cite{BBHU}.  
  We conjecture that this claim is false, and in Section 5, we outline the strategy for constructing a counterexample. We argue\footnote{See Section~\ref{discussion} for further discussion.}  that finding such a counterexample is related to a specific class of Banach spaces, namely, nonreflexive Banach spaces that do not contain $c_0$ or $\ell_1$.
However, what is more important to us than disproving this claim is the classification of continuous first-order theories and the characterization of model-theoretic properties in terms of functional analysis and Banach space theory. This classification not only enhances our understanding of continuous logic but also sheds light on classical logic, offering fundamental insights. We believe that the absence of a ``perfect analogue of Shelah's theorem in continuous logic" is not solely a limitation but also has {\em positive} aspects and contributions.

In \cite{K-Baire}, we studied the relationship between $OP$, $IP$, and $SOP$ in classical logic and subclasses of Baire-1 functions, demonstrating the correspondence between Shelah's theorem and the Eberlein--\v{S}mulian Theorem (cf. Fact~\ref{ES theorem} below). The present paper extends the results of \cite{K-Baire} by introducing a new approach to Shelah's classification theory in the context of continuous first-order logic \cite{BBHU}. 

This approach is grounded in the observation that studying the model-theoretic properties of formulas in {\em models}—rather than focusing solely on these properties in {\em theories}—yields a more refined stability theory and establishes significant connections between model theory and other areas of mathematics. 

We aim to address the question of whether a version of Shelah's theorem exists for *continuous logic* by investigating the precise placement of continuous model-theoretic properties within the framework of function spaces. However, while answering this question is one of our objectives, understanding the relationship between model theory and function spaces remains our primary goal, with the former being a natural consequence of the latter.


Continuous logic \cite{BBHU, BU} is a generalization of standard first-order logic designed to study analytic structures such as metric spaces, measure algebras, and Banach spaces, including Banach lattices and $C^*$-algebras, among others. While there are several other alternatives to standard first-order logic 
 for studying metric structures, continuous logic offers many advantages over these alternatives.

Although this logic is particularly suited to structures arising from functional analysis, it is {\em almost}\footnote{See Section~\ref{discussion} for some non-parallelisms.} strikingly parallel to standard first-order logic. Moreover, it retains many desirable characteristics of first-order model theory, such as the compactness theorem, the L\"{o}wenheim--Skolem theorems, the omitting types theorem, and fundamental results from stability theory. 
As we will demonstrate in this paper, continuous logic provides a natural framework for classification theory in the sense of \cite{Shelah}.

Surprisingly, the study of desirable properties within a model and for the extensions or theory of a model in Banach space theory has already been explored by Bessaga--Pe\l czy\'{n}ski \cite{BP58}, Odell--Rosenthal \cite{OR75}, and Haydon--Odell--Rosenthal \cite{HOR}. Theorem~A and Theorem~B from \cite[pages 1--2]{HOR} provide evidence for this connection, as we will discuss in Section~\ref{discussion} below. The first theorem pertains to properties within a {\em model}, while the second addresses properties of a {\em theory}. Notably, Theorem~B of \cite{HOR} is very {\em similar to} Shelah's theorem mentioned earlier, which states that a theory is unstable if and only if it has the $IP$ or $SOP$. Below, we provide a brief explanation of this theorem.

Let $X$ be a compact metric space. We will shortly define {\em proper} subclasses $C(X) \subsetneqq B_{1/4}(X) \varsubsetneqq B_{1/2}(X)$ of (real-valued) Baire-1 functions on $X$, denoted by $B_1(X)$ (cf. Definition~\ref{subclasses} below). Suppose $(f_n)$ is a sequence of real-valued continuous functions on $X$ that converges pointwise to a function $f$. Theorem~B(a) of \cite{HOR} asserts the following: If $f \in B_1(X) \setminus B_{1/2}(X)$, then $(f_n)$ has a subsequence whose spreading model\footnote{Loosely (and possibly inaccurately) speaking, a spreading model can be considered as a sequence $(\phi(a_n, y) : n < \omega)$, where $\phi$ is a formula and $(a_n)$ is a sequence in an elementary extension of the original model, in the sense of model theory.} is equivalent to the unit vector basis of $\ell_1$.

Theorem~B(b) of \cite{HOR} states: If $f \in B_{1/4}(X) \setminus C(X)$, there exists a convex block subsequence $(g_n)$\footnote{Roughly speaking, a convex block subsequence of $(f_n)$ can be thought of as a sequence of Boolean combinations of the instances of $\phi(a_n, y)$, where $a_n$ belongs to the monster model and $\phi$ is a formula.} of $(f_n)$ whose spreading model is equivalent to the summing basis for $c_0$.

We can assume that each spreading model of a sequence $(f_n)$ corresponds to a sequence of formulas of the form $(\phi(a_n, y) : n < \omega)$, where $a_n$ belongs to the monster model of a {\em theory}. Consequently, Theorem~B(a) should be compared with Theorem~\ref{NIP=Baire-1/2}(v) below, as it corresponds to the independence property ($IP$) for a {\em theory}. Similarly, Theorem~B(b) should be compared with Proposition~\ref{SOP->c0} below, as it is related to the strict order property ($SOP$) for a {\em theory}.

However, we argue that there is no perfect analogue of Shelah's theorem in continuous logic or Banach space theory. This absence arises from the existence of functions in $B_{1/2}(X) \setminus B_{1/4}(X)$, meaning that the two alternatives (a) and (b) in Theorem~B do not cover all cases.\footnote{See Section~\ref{discussion} for a detailed discussion.}

There are several considerations to keep in mind. The notions and results in the continuous framework are not literal translations of their classical logic counterparts, and the proofs are often more complex than those in the classical setting.\footnote{For example, $NIP$ in continuous logic corresponds to Baire-1/2 functions, which strictly include $DBSC$ functions. Recall from \cite{K-Baire} that $NIP$ corresponds to $DBSC$ in classical logic.} On the other hand, not all results from the classification theory of classical ($\{0,1\}$-valued) logic have been established here. In fact, classification in the continuous case is {\em finer}.\footnote{In continuous logic, based on the fact that $DBSC \subsetneqq B_{1/2}$, there exists a classification corresponding to subclasses between $DBSC$ and $B_{1/2}$. However, in the classical case, $DBSC = B_{1/2}$. (Cf. Section~\ref{discussion}.)}

\medskip

The main results of this paper are summarized as follows:

\begin{itemize}
  \item \textbf{Theorems~\ref{SOP-main} and \ref{SOP characterization}:} These theorems extend and refine Shelah's theorem, providing a comprehensive characterization of $SOP$ in terms of function spaces.
  \item \textbf{Theorem~\ref{NIP=Baire-1/2}:} This result characterizes $NIP$ (for a continuous theory) using function spaces and Banach space theory. As a corollary (Corollary~\ref{NIP=DBSC}), it offers a characterization of $NIP$ formulas in classical logic.
  \item \textbf{Theorem~\ref{NIP definability}:} This theorem establishes the Baire-1/2 definability of $NIP$ formulas in continuous logic. The corresponding corollary (Corollary~\ref{NIP definability classic}) demonstrates the $DBSC$ definability of $NIP$ formulas in classical logic.
  \item \textbf{Additional results:} Lemma~\ref{bounded variation-continuous} generalizes the concept of alternation number in model theory. Proposition~\ref{SOP->c0} demonstrates the embedding of $c_0$ in the space of formulas within $SOP$ theories, while Proposition~\ref{Shelah-like2} presents several theorems analogous to Shelah's results. Furthermore, the discussions and conclusions in Chapter~5 shed light on significant and, in our view, foundational topics.
\end{itemize}

\medskip

The broader impact and significance of this work can be outlined as follows:

\medskip

\noindent (i) This paper represents progress toward a classification theory for continuous first-order theories, drawing a parallel to the well-established classification in classical logic.

\noindent (ii) The results presented here deepen our understanding of classical model theory and, in some cases, lead to new findings. Additionally, they elucidate why certain theorems are not applicable in continuous logic while holding in the classical setting.

\noindent (iii) This work introduces new classification frameworks for Banach spaces, addressing areas that have not previously been explored.

\noindent (iv) By forging connections between model theory, Banach space theory, and function theory, this study opens up new avenues of inquiry, introduces novel tools, and offers fresh insights into these foundational fields.

\medskip

We anticipate that readers may identify further contributions and implications beyond those highlighted here.

\medskip
This paper is organized as follows: In Section~2, we present the notation and preliminaries. Section~3 focuses on the study of $SOP$ in continuous logic and provides characterizations of $SOP$, some of which are novel even for classical logic. Section~4 examines $NIP$ in continuous logic, offering a characterization of $NIP$ in terms of function spaces and the definability of coheirs. In Section~5, we discuss why Shelah's theorem does not extend to continuous logic, the existence of pathological Banach spaces (i.e., spaces that do not contain infinite-dimensional reflexive subspaces, $\ell_1$, or $c_0$), and the relationship between these Banach spaces and the failure of Shelah's theorem for continuous logic. Finally, we introduce new classes of (continuous) theories based on function spaces.

\section{Notation and Preliminaries} \label{2}
We work within the framework of continuous first-order logic \cite{BBHU, BU}, assuming that the reader is familiar with its concepts. Continuous logic is an extension of classical ($\{0,1\}$-valued) logic; therefore, our results also apply to  the classical case. Our model theory notation is standard, and a reference such as \cite{BBHU} provides sufficient background for the model-theoretic aspects of this paper.

\begin{Convention} \label{app satisfaction} Nevertheless, in some parts-- particularly in Section~3, which deals
 more with combinatorial than analytical aspects-- we make use of the more intuitive notation \cite{HI}. Therefore, in these cases, we use the quantifiers $\forall$, $\exists$, and the connectives $\wedge$, $\vee$, and the formulas take the form $\forall x (\phi(x)\geq r)$, $\exists x(\phi(x)\leq r\wedge\psi(x)\geq s)$,   etc.

 From a semantic perspective, the notation \cite{HI}  requires {\em approximate satisfaction} in order for the compactness theorem to hold. That said, we do not explicitly discuss the notion of approximate satisfaction, although it is clearly present in some arguments, including in Fact~\ref{Ramsey}.
  Despite this, it is worth recalling that, intuitively, the approximate satisfaction  $M\models_{\text{app}}\exists x (\phi(x)\leq r)$ means that for every $\epsilon>0$ there is some $a$ in $M$ such that the statement $\phi(a)\leq r+\epsilon$ holds. Now,
   $M\models \inf_x  \phi(x)\leq r$
   if and only if  $M\models_{\text{app}}\exists x (\phi(x)\leq r)$.
This means that Henson-Iovino satisfaction of $\varphi$ is equivalent to the following: for any arbitrary $\epsilon > 0$, the simultaneous satisfaction—in the standard discrete sense—of every $\epsilon$-relaxed version of the inequalities that constitute $\varphi$.


It is worth noting for the reader that our first actual use of the notation of \cite{HI} and the notion of approximate satisfaction appears at the beginning of Section 3. For more detailed points regarding the comparison of syntax and semantics between the logics \cite{HI} and \cite{BBHU}, refer to Notation 1.9 and Remark 2.11 in \cite{BU}.
\end{Convention}

 For the functional analysis component, the reader is advised to consult this paper alongside \cite{HOR} and \cite{K-Baire}. Nonetheless, we include some necessary background from functional analysis for completeness.

\subsection{Subclasses of Baire~1 Functions}
In this subsection, we define the function spaces that will be central to our study and outline some elementary relationships between them.

Let $X$ be a set and $A$ a subset of ${\mathbb{R}}^X$. The topology of \emph{pointwise convergence} on $X$ is inherited from the usual product topology on ${\mathbb{R}}^X$. A typical neighborhood of a function $f$ is given by 
\[
U_f(x_1,\ldots,x_n;\epsilon) = \{g \in {\mathbb{R}}^X : |f(x_i) - g(x_i)| < \epsilon \text{ for } i \leq n\},
\]
where $\epsilon > 0$ and $\{x_1, \ldots, x_n\}$ is a finite subset of $X$. 

Throughout this paper, subsets of ${\mathbb{R}}^X$ are endowed with the topology of pointwise convergence unless explicitly stated otherwise.

On a general (non-compact) topological space $X$, the spaces: $C(X)$ of continuous (real-valued)
functions and $C_b(X)$ of continuous bounded functions are different. The supremum norm
$\|f\|_\infty =\sup_{x\in X} |f(x)|$ is finite for $f\in C_b(X)$, but not necessarily for $f\in C(X)$—i.e., it is
generally $C_b(X)$, and not $C(X)$, that is a Banach space.

\begin{Convention} (i): For the purposes of this article, it suffices to always consider the space $X$ as a compact Hausdorff space (and even metric). Thus, $C_b(X)=C(X)$.
Nevertheless, some of the analytical theorems we use hold under weaker assumptions than compactness for $X$. Wherever necessary, we will note this in the footnotes.
\newline
(ii): In this paper, we essentially work with the topology of pointwise convergence on $C(X)$. 
In some references, the notation $C_p(X)$ is used to emphasize this topology; however, 
we continue to use the notation $C(X)$ in order to maintain consistency with one of our 
main references, namely \cite{BFT}.
On the other hand, in the reference used in this paper, pointwise boundedness is sufficient. 
However, for the sake of simplicity and since formulas in continuous logic take values in 
$[0,1]$, we adopt the stronger assumption of uniform boundedness.
\newline
(iii)
In the following definition, we have used Baire’s original definition of Baire class~1 functions, 
although the standard modern definition is different (see Definition~1A(c) in \cite{BFT}). 
However, since we assume that $X$ is compact and metrizable, Proposition~1E in \cite{BFT}
implies that these two definitions coincide.
\end{Convention}

\begin{Definition} \label{subclasses} 
Let \( X \) be a compact metric space. The following is a list of spaces of real-valued functions defined on \( X \). The key spaces to note are (iii) and (iv).

\begin{itemize}
    \item[(i)] \( C(X) \): The space of continuous real-valued functions on \( X \).
    \item[(ii)] \( B_1(X) \): The first Baire class, consisting of functions \( f: X \to \mathbb{R} \) such that \( f \) is the pointwise limit of a sequence of continuous functions.
    \item[(iii)] \( DBSC(X) \)\footnote{Difference of Bounded Semi-Continuous functions}: The space of real-valued functions \( f \) on \( X \) such that there exist bounded semi-continuous functions \( F_1, F_2 \) with \( f = F_1 - F_2 \).
    \item[(iv)] \( B_{1/2}(X) \) (Baire-1/2): The space of real-valued functions \( f \) on \( X \) such that \( f \) is the uniform limit of a sequence \( (F_n) \in DBSC(X) \).
    \item[(v)] \( B_{1/4}(X) \) (Baire-1/4): The space of real-valued functions \( f \) on \( X \) for which there exist \( C > 0 \) and a sequence \( (F_n) \in DBSC(X) \) such that:
    \begin{enumerate}
        \item \( (F_n) \) uniformly converges to \( f \),
        \item for all \( n \), there is a sequence \( (f_i^n) \in C(X) \) with \( f_i^n \to F_n \) pointwise and 
        \[
        \sum_{i=1}^\infty |f_i^n - f_{i+1}^n(x)| \leq C \quad \text{for all } x \in X.
        \]
    \end{enumerate}
\end{itemize}
\end{Definition}

\medskip

We now provide some elementary relationships between these function spaces. (However, we will rely on additional content from \cite{HOR}.)

\begin{Fact} \label{properties} 
\begin{itemize}
    \item[(i)] \cite[page~3]{HOR} A function \( f \in DBSC(X) \) if and only if there exist a uniformly bounded sequence \( (f_n) \in C(X) \) and \( C > 0 \) such that \( f_n \to f \) pointwise and 
    \[
    \sum_{n=1}^\infty |f_n(x) - f_{n+1}(x)| \leq C \quad \text{for all } x \in X.
    \]

    \item[(ii)] \cite[page~3]{HOR} The spaces \( DBSC(X) \), \( B_{1/4}(X) \), and \( B_{1/2}(X) \) are Banach algebras with respect to suitable norms.

    \item[(iii)] \cite[Proposition~5.1]{HOR} For an uncountable compact metric space \( X \),
    \[
    C(X) \subsetneqq DBSC(X) \subsetneqq B_{1/4}(X) \subsetneqq B_{1/2}(X) \subsetneqq B_1(X).
    \]

    \item[(iv)] \cite[Proposition~2.2]{CMR} Let \( f \) be a simple\footnote{A function is called \emph{simple} if its range is a finite set.} real-valued function on \( X \). Then \( f \in B_{1/2}(X) \) if and only if \( f \in DBSC(X) \).
\end{itemize}
\end{Fact}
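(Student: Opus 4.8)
The plan is to prove the nontrivial implication, since $DBSC(X)\subseteq B_{1/2}(X)$ is immediate from Fact~\ref{properties}(iii). So suppose $f$ is simple with finite range $\{a_1,\dots,a_k\}$ and $f\in B_{1/2}(X)$; I want to conclude $f\in DBSC(X)$. If $f$ is constant it is continuous and hence lies in $DBSC(X)$, so I may assume $k\geq 2$ and set $\delta=\min_{i\neq j}|a_i-a_j|>0$. First I would exploit the hypothesis $f\in B_{1/2}(X)$ to approximate: by Definition~\ref{subclasses}(iv) there is a sequence in $DBSC(X)$ converging uniformly to $f$, so I may fix a single $F\in DBSC(X)$ with $\|F-f\|_\infty<\delta/3$. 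The role of the gap $\delta$ is that, for each $x$, the value $f(x)$ is the unique element of the range lying within distance $\delta/3$ of $F(x)$.

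Next I would build a Lipschitz ``rounding'' map that recovers $f$ from $F$ exactly. Define $\rho:\R\to\R$ to be the piecewise-linear function that is constantly equal to $a_j$ on each interval $[a_j-\delta/3,\,a_j+\delta/3]$ and interpolates linearly on the (disjoint) gaps between consecutive such intervals; this $\rho$ is Lipschitz with some constant $L$. Since $F(x)$ lies in the $\delta/3$-ball around $f(x)$ for every $x$, we obtain $\rho\circ F=f$ pointwise.

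The key step, and the only one requiring genuine work, is to show that $DBSC(X)$ is closed under post-composition with Lipschitz functions, which then yields $f=\rho\circ F\in DBSC(X)$. I would prove this using the characterization in Fact~\ref{properties}(i): pick a uniformly bounded sequence $(g_n)\in C(X)$ with $g_n\to F$ pointwise and $\sum_n|g_n(x)-g_{n+1}(x)|\leq C$ for all $x$. Then each $\rho\circ g_n$ is continuous, the sequence $(\rho\circ g_n)$ is again uniformly bounded (as $\rho$ is bounded on the common range of the $g_n$), it converges pointwise to $\rho\circ F$ by continuity of $\rho$, and the Lipschitz estimate gives $\sum_n|\rho(g_n(x))-\rho(g_{n+1}(x))|\leq L\sum_n|g_n(x)-g_{n+1}(x)|\leq LC$ for all $x\in X$. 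By Fact~\ref{properties}(i) this exhibits $\rho\circ F\in DBSC(X)$, completing the argument.

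I expect the main obstacle to be precisely the verification of this closure property under Lipschitz composition; once it is established, the rounding construction does the rest. Notably, the whole argument stays inside $DBSC$ and $B_{1/2}$ and needs no control of the intermediate class $B_{1/4}$, which is consistent with the fact that for simple functions the two outer classes already coincide.
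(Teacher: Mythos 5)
Your argument for part (iv) is correct, but note that the paper itself offers no proof of this Fact: all four items are imported from the literature (\cite[page~3]{HOR}, \cite[Proposition~5.1]{HOR}, \cite[Proposition~2.2]{CMR}), so there is no internal proof to compare against. What you supply is a clean, self-contained derivation of the one item that admits a short one. The gap $\delta$ in the finite range lets you fix a single $F\in DBSC(X)$ with $\|F-f\|_\infty<\delta/3$; the Lipschitz retraction $\rho$ then satisfies $\rho\circ F=f$ exactly; and the closure of $DBSC$ under post-composition with Lipschitz maps follows immediately from the variation characterization in part (i) via $\sum_n|\rho(g_n(x))-\rho(g_{n+1}(x))|\le L\sum_n|g_n(x)-g_{n+1}(x)|\le LC$. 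Two cosmetic points: you should extend $\rho$ constantly on $(-\infty,a_1-\delta/3]$ and $[a_k+\delta/3,\infty)$ (writing $a_1<\cdots<a_k$ for the sorted range) so that $\rho\circ g_n$ is defined for arbitrary continuous $g_n$, and then $\rho$ is automatically bounded since its range is $[a_1,a_k]$. Your route has the virtue of using only item (i) as input and never invoking $B_{1/4}$, which matches the paper's later observation that $DBSC=B_{1/4}=B_{1/2}$ for simple functions. What the proposal does not address are items (i)--(iii); in particular the strictness of the inclusions in (iii) for uncountable compact metric $X$ requires explicit examples and cannot be obtained by this kind of soft argument, so those parts must remain citations.
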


\medskip

Let \( Y \) be a topological space. A subset \( A \subseteq Y \) is said to be \emph{relatively compact} if it has compact closure in \( Y \). We recall from \cite[Fact~3.1]{K-Baire} the well-known compactness theorem of Eberlein and \v{S}mulian.

\begin{Fact}[Eberlein--\v{S}mulian Theorem] \label{ES theorem}
Let \( X \) be a compact Hausdorff space and \( A \) a norm-bounded subset of \( C(X) \). Then for the topology of pointwise convergence, the following are equivalent:
\begin{itemize}
    \item[(1)] \( A \) is relatively compact in \( C(X) \).
    \item[(2)] The following two properties hold:
    \begin{itemize}
        \item[(RSC)]\footnote{Relative sequential compactness in \( \mathbb{R}^X \)} Every sequence in \( A \) has a convergent subsequence in \( \mathbb{R}^X \).
        \item[(SCP)]\footnote{Sequential completeness} The limit of every convergent sequence in \( A \) is continuous.
    \end{itemize}
\end{itemize}
\end{Fact}

\noindent \emph{Explanation.} See \cite{W} for a short proof of the Eberlein--\v{S}mulian Theorem. Recall from \cite[Lemma 3.56]{FH} that for each compact Hausdorff space \( X \), the weak topology and the pointwise convergence topology on \( C(X) \) are the same.\footnote{In fact, a similar statement holds for $C_b(X)$, under the weakest assumption that $X$ is merely countably compact. However, as we mentioned earlier, for the purposes of this article, it suffices to work with a compact and even metric space $X$.}
 Notice that every sequence contains a subsequence converging to an element of \( C(X) \) if and only if:
\begin{itemize}
    \item[(i)] Every sequence has a convergent subsequence in \( \mathbb{R}^X \), and
    \item[(ii)] The limit of every convergent sequence is continuous.
\end{itemize}
Thus, condition (2) is precisely the condition \( B \) in the main theorem of \cite{W}, while condition (1) corresponds to condition \( A \) in \cite{W}. Although the Eberlein--\v{S}mulian Theorem is proved for arbitrary Banach spaces (even normed spaces), it follows easily from the case of \( C(X) \) (see \cite[Theorem~462D]{Fremlin4}--Version of 1.2.13).\footnote{See https://www1.essex.ac.uk/maths/people/fremlin/cont46.htm}

It is worth mentioning that, what is actually used in Fact~\ref{ES theorem}
is taken from Theorems 1 and 2 in Section 3 of  Grothendieck's paper \cite{Gro}.   Grothendieck, in that paper, refers to \v{S}mulian's work on Banach spaces, but his own results are specifically about \( C_p \) spaces (i.e., spaces of continuous functions endowed with the topology of pointwise convergence).

\medskip

We now recall a crucial result of Bourgain, Fremlin, and Talagrand \cite{BFT}. First, recall that a subset \( A \) of a topological space \( Y \) is called \emph{relatively countably compact} if every sequence in \( A \) has a cluster point in \( Y \).

\begin{Definition} \label{angelic}
A regular Hausdorff space \( Y \) is said to be \emph{angelic} if:
\begin{itemize}
    \item[(i)] Every relatively countably compact set \( A \) is relatively compact, and
    \item[(ii)] Every point in the closure of a relatively compact set \( A \) is the limit of a sequence in \( A \).
\end{itemize}
\end{Definition}

\begin{Fact}[Bourgain--Fremlin--Talagrand Criterion] \label{BFT criterion}
Let \( X \) be a compact Hausdorff space and \( A \) a \emph{countable}, norm-bounded subset of \( C(X) \). Then for the topology of pointwise convergence, the following are equivalent:
\begin{itemize}
    \item[(1)] \( A \) is relatively compact in \( B_1(X) \).
    \item[(2)] The closure of \( A \) in \( \mathbb{R}^X \) is angelic.
    \item[(3)] If \( r < s \) and \( (f_n) \) is a sequence in \( A \), then there exist disjoint \emph{finite} sets \( E, F \) such that:
    \[
    \bigcap_{n \in E}[f_n \leq r] \cap \bigcap_{n \in F}[f_n \geq s] = \emptyset.
    \]
    (Here \( [f \leq r] = \{x \in X : f(x) \leq r\} \) and \( [f \geq s] = \{x \in X : f(x) \geq s\} \).)
\end{itemize}
\end{Fact}

\begin{proof}
The equivalence (2)~\(\Leftrightarrow\)~(3) follows from the equivalence (vi)~\(\Leftrightarrow\)~(viii) of \cite[Theorem~4D]{BFT}. Indeed, since \( X \) is compact, we can work with \emph{infinite} subsets \( E, F \subseteq \mathbb{N} \), so the condition (3) above and (viii) of Theorem~4D in \cite{BFT} are equivalent (see also the equivalence (iii)~\(\Leftrightarrow\)~(iv) of \cite[Lemma~3.12]{K3}).

(2)~\(\Rightarrow\)~(1): This is evident, as the limits of sequences of continuous functions are Baire-1.

(1)~\(\Rightarrow\)~(2): By Theorem~3F of \cite{BFT}, \( B_1(X) \) is angelic. Since any subspace of an angelic space is angelic (cf.  \cite[462B(a)]{Fremlin4}--Version of 1.2.13)\footnote{See the following for this version: https://www1.essex.ac.uk/maths/people/fremlin/cont46.htm}, the closure of \( A \) in \( \mathbb{R}^X \) is angelic. Note that, since the closure of \( A \) in \( B_1(X) \) is compact, this closure is closed (and compact) in the space \( \mathbb{R}^X \). In particular, this implies that the closure of \( A \) in \( \mathbb{R}^X \) is contained in \( B_1(X) \).
\end{proof}
In Fact \ref{BFT criterion}, the countability condition $A$ is not necessary (refer to Corollary 4G of \cite{BFT}). However, as we will  see in the subsection below, the set $A$ is considered as a parameter subset of the space $X$. Since, to utilize the results of \cite{HOR}, we assume $X$ is compact and metric, it is preferable to impose the countability condition on $A$.

\subsection{Continuous Model Theory}
As previously mentioned, the model-theoretic notations, whenever we are discussing analysis rather than combinatorics, are the same as those presented in \cite{BBHU}.

\begin{Convention}
In this paper, when the formula $\phi(x,y)$ is discussed, it is understood that $x$ and $y$ are $m$-tuples and $n$-tuples for some finite $m$ and $n$. Although in some arguments it suffices to assume $|x|= 1$, we do not impose this restriction. Thus, if $x=(x_1,\ldots,x_m)$, by $\sup_x\phi(x)$ (or $\forall x\phi(x)$) we mean that
 $\sup_{x_1}\cdots\sup_{x_m}\phi(x_1,\ldots,x_m)$
 (or $\forall{x_1}\cdots\forall{x_m}\phi(x_1,\ldots,x_m)$), etc. Nevertheless, whenever emphasis is needed to avoid ambiguity, we explicitly write $\bar x$ or $(x_1,\ldots,x_m)$.
\end{Convention}

 We fix an \( L \)-formula \( \phi(x,y) \), a complete \( L \)-theory \( T \), and a subset \( A \) of the monster model of \( T \). Define \( \tilde{\phi}(y,x) = \phi(x,y) \). 

We define \( p = \mathrm{tp}_\phi(a/A) \) as the function \( \phi(p,y):A \to [0,1] \) given by \( b \mapsto \phi(a,b) \). This function is called a complete \(\phi\)-type over \( A \). The set of all complete \(\phi\)-types over \( A \) is denoted by \( S_\phi(A) \). 

We equip \( S_\phi(A) \) with the weakest topology such that all functions \( p \mapsto \phi(p,b) \) (for \( b \in A \)) are continuous. The resulting space is compact and Hausdorff, but not necessarily totally disconnected. Let \( X = S_{\tilde{\phi}}(A) \), the space of complete \(\tilde{\phi}\)-types over \( A \). 

Note that the functions \( q \mapsto \phi(a,q) \) (for \( a \in A \)) are \emph{continuous}. As \( \phi \) is fixed, we can identify this set of functions with \( A \). Thus, \( A \) can be seen as a subset of all bounded continuous functions on \( X \), denoted \( A \subseteq C(X) \). Similarly, we can define the spaces \( B_1(X) \), \( DBSC(X) \), \( B_{1/2}(X) \), and \( B_{1/4}(X) \) as previously described.

The following result is worth highlighting (see \cite[Corollary~2.10]{K3} and \cite[Proposition~2.6]{Iovino} for real-valued logics, and \cite[Proposition~2.2]{Pillay-Grothendieck} for classical logic):

\begin{Fact}[Grothendieck Criterion] \label{EG}
Let \( (a_i) \) be a sequence in some model of \( T \), and let \( \phi(x,y) \) be a formula. Then the following are equivalent:
\begin{itemize}
    \item[(i)] There does not exist a sequence \( (b_j) \) and constants \( r < s \) such that either:
    \begin{itemize}
        \item \( \phi(a_i,b_j) \leq r \wedge \phi(a_j,b_i) \geq s \) holds if and only if \( i < j \), or
        \item \( \phi(a_j,b_i) \leq r \wedge \phi(a_i,b_j) \geq s \) holds if and only if \( i < j \).
    \end{itemize}
    \item[(ii)] For any sequence \( (b_j) \),
    \[
    \lim_i \lim_j \phi(a_i,b_j) = \lim_j \lim_i \phi(a_i,b_j),
    \]
    whenever both limits exist.
    \item[(iii)] Every function in the pointwise closure of 
    \[
    A = \{\phi(a_i,y) : S_{\tilde{\phi}}(\{a_i\}) \to [0,1] : i < \omega\}
    \]
    is continuous. Equivalently, \( A \) is relatively pointwise compact in \( C(S_{\tilde{\phi}}(\{a_i\})) \).
\end{itemize}
\end{Fact}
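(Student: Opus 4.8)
The plan is to split the cycle of equivalences into an analytic part, (ii) $\Leftrightarrow$ (iii), which is a direct instance of Grothendieck's double-limit criterion, and a combinatorial part, (i) $\Leftrightarrow$ (ii). Write $X=S_{\tilde\phi}(\{a_i\})$, a compact Hausdorff space, and identify each $a_i$ with the continuous function $\phi(a_i,\cdot)\colon X\to[0,1]$, so that $A=\{\phi(a_i,\cdot):i<\omega\}$ is a norm-bounded subset of $C(X)$. Since the pointwise closure of a bounded subset of $\R^X$ is automatically pointwise compact by Tychonoff, relative pointwise compactness of $A$ in $C(X)$ is the same as saying that the pointwise closure of $A$ is contained in $C(X)$, i.e.\ that every function in that closure is continuous; this is the internal equivalence asserted in (iii). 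Each element $b$ of the model determines a type $\mathrm{tp}_{\tilde\phi}(b/\{a_i\})\in X$, and conversely, by saturation every point of $X$ is realized by some $b$; hence sequences $(b_j)$ in the model correspond exactly to sequences of points of $X$, and $\phi(a_i,b_j)$ is the value of $\phi(a_i,\cdot)$ at the corresponding point.

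For (ii) $\Leftrightarrow$ (iii) I would invoke Grothendieck's double-limit criterion in the form already available for real-valued logic (\cite[Corollary~2.10]{K3}, \cite[Proposition~2.6]{Iovino}), which says precisely that a bounded $A\subseteq C(X)$ has all its pointwise limits continuous iff for every sequence $(f_n)\subseteq A$ and every sequence $(x_m)$ in $X$ one has $\lim_n\lim_m f_n(x_m)=\lim_m\lim_n f_n(x_m)$ whenever both iterated limits exist. Under the identifications above this is exactly the statement that (iii) is equivalent to (ii). One may also note, via \cite[462F]{Fremlin4}, that on $C(X)$ the pointwise and weak topologies coincide, so this is the classical Grothendieck theorem on relative weak compactness.

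The easy half of the combinatorial equivalence is (ii) $\Rightarrow$ (i), which I argue contrapositively. Suppose pattern A of (i) holds for some $(b_j)$ and $r<s$, so that $\phi(a_m,b_n)\le r$ whenever $m<n$ and $\phi(a_m,b_n)\ge s$ whenever $m>n$. Passing to a single subsequence by a routine double diagonalization, I may assume all the inner limits exist; then $\lim_m\lim_n\phi(a_m,b_n)\le r<s\le\lim_n\lim_m\phi(a_m,b_n)$, so this subsequence witnesses the failure of (ii). Pattern B is symmetric. Hence (ii) implies (i).

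The substantive step is (i) $\Rightarrow$ (ii), equivalently $\neg$(ii) $\Rightarrow\neg$(i), and this is where I expect the real work to lie. Assume some $(b_j)$ has $L_1=\lim_i\lim_j\phi(a_i,b_j)\ne\lim_j\lim_i\phi(a_i,b_j)=L_2$; say $L_1<L_2$, and fix $r,s$ with $L_1<r<s<L_2$. Writing $c_i=\lim_j\phi(a_i,b_j)$ and $d_j=\lim_i\phi(a_i,b_j)$, we have $c_i\to L_1<r$ and $d_j\to L_2>s$, so after discarding an initial segment we may assume $c_n<r$ and $d_n>s$ for all $n$. I would then build indices $n_1<n_2<\cdots$ recursively: having chosen $n_1,\dots,n_k$, pick $n_{k+1}$ so large that simultaneously $\phi(a_{n_p},b_{n_{k+1}})<r$ for all $p\le k$ (possible since for a fixed row $\lim_j\phi(a_{n_p},b_j)=c_{n_p}<r$) and $\phi(a_{n_{k+1}},b_{n_p})>s$ for all $p\le k$ (possible since for a fixed column $\lim_i\phi(a_i,b_{n_p})=d_{n_p}>s$). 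After relabelling, the resulting sequences satisfy $\phi(a_i,b_j)\le r$ for $i<j$ and $\phi(a_i,b_j)\ge s$ for $i>j$, which is exactly pattern A of (i); the ``iff'' is automatic, since on the diagonal $r<s$ forbids the conjunction. The case $L_1>L_2$ produces pattern B by the same construction with the roles of $r$ and $s$ interchanged. The only delicate point is that each index $n_p$ must serve both as a row with small inner limit and as a column with large inner limit, and this is consistent precisely because $c_n$ and $d_n$ have the different limits $L_1<L_2$, which is exactly what allows the single-sequence staircase required by (i) to be extracted.
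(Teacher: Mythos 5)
Your proof is correct, and it follows the standard route. Note first that the paper does not actually prove this statement: it is recorded as a \emph{Fact} and delegated entirely to the cited sources (\cite[Corollary~2.10]{K3}, \cite[Proposition~2.6]{Iovino} for the real-valued case, \cite[Proposition~2.2]{Pillay-Grothendieck} for the classical case). So there is no in-paper argument to compare against line by line; what you have done is reconstruct the proof those references give, with the same decomposition: the analytic half (ii)$\Leftrightarrow$(iii) is Grothendieck's double-limit theorem for norm-bounded subsets of $C(X)$ (which you, like the paper, take as a black box), and the combinatorial half (i)$\Leftrightarrow$(ii) is the staircase extraction. Your identification of sequences $(b_j)$ in the (saturated) model with sequences of points of $X=S_{\tilde\phi}(\{a_i\})$, and your observation that relative pointwise compactness of $A$ in $C(X)$ amounts to the pointwise closure landing inside $C(X)$, are both right. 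The recursive choice of indices in (i)$\Rightarrow$(ii) is carried out correctly, including the key point that each chosen index must serve simultaneously as a row with small inner limit and a column with large inner limit.

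Two small remarks. First, in (ii)$\Rightarrow$(i) you pass to a subsequence of the $a_i$'s as well as of the $b_j$'s in order to make both iterated limits exist, and then say this subsequence ``witnesses the failure of (ii)''; as literally written, (ii) takes $\lim_i$ along the full given sequence $(a_i)$, so this step requires reading (ii) as quantifying over subsequences of $(a_i)$ as well (equivalently, over arbitrary sequences from the set $\{a_i\}$, which is how Grothendieck's criterion is normally stated). That reading is surely intended, and it is in fact forced: with the order pattern in hand, $\lim_j\phi(a_i,b_j)$ need not exist along the full sequence, so the subsequence extraction cannot be avoided. Second, your justification of the ``iff'' in (i) mentions only the diagonal; for $i>j$ the conjunction also fails, since your construction gives $\phi(a_i,b_j)>s>r$ there, violating the first conjunct. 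Both points are cosmetic; the argument stands.
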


\begin{Remark}
It is worth mentioning that Fact~\ref{EG} is derived from one of the results in Grothendieck's seminal paper \cite{Gro}. 
In particular, the equivalence of (ii) and (iii) has been proven in a much more general setting for 
the space of continuous functions with the topology of pointwise convergence  \( C_p(X) \), where \( X \) can be  {\bf countably}   compact and Hausdorff. 
Grothendieck mentions there that he used ideas from Eberlein in this achievement; therefore, it may also be permissible to refer to this fact as the Eberlein–Grothendieck  (fact/theorem).
\end{Remark}

\section{Strict Order Property}

In this section and the next, we introduce and study the ``correct" generalizations of \(SOP/IP\) for continuous logic. We argue that they are the appropriate generalizations for our purposes.

\medskip
First, we need some notions and facts about indiscernible sequences. Let \( \phi(x,y) \) be a formula, \( n \) a natural number, and \( A \subseteq [0,1] \). We say that a condition \( \psi(x_1, \ldots, x_n) \) is a \(\phi\)-\(n\)-\(A\)-condition if it is of the form
\[
\exists y \left( \bigwedge_{i\in E} \phi(x_i, y) \leq r_i \wedge \bigwedge_{j\in F} \phi(x_j, y) \geq r_j \right)
\]
or
\[
\forall y \left( \bigvee_{i\in E} \phi(x_i, y) \leq r_i \vee \bigvee_{j\in F} \phi(x_j, y) \geq r_j \right),
\]
where  $E$ and $F$ are disjoint subsets of $\{1,\ldots,n\}$ and 
\( r_i, r_j \in A \). In this case, \( \psi(x_1, \ldots, x_n) \) has \( n \) free variables \( x_1, \ldots, x_n \) and a bounded variable \( y \).

If \( M \) is a model of a theory and \( \bar{a} = (a_1, \ldots, a_n) \in M^n \), the \(\phi\)-\(n\)-\(A\)-type of \( \bar{a} \), denoted \( tp_{\phi,n,A}(\bar{a}) \), is the set of all \(\phi\)-\(n\)-\(A\)-conditions \( \psi(\bar{x}) \) such that \( \models \psi(\bar{a}) \).

\begin{Remark}
As stated in Convention~\ref{app satisfaction}, we use the notation \cite{HI} in combinatorial discussions. We note that it would have been possible to present the notion of $\phi$-$n$-$A$-type in the same way types were defined in the previous section; however, for several reasons, it is preferable not to do so. The first and more important reason is that in the proof of the main theorem of this section (namely, Theorem~\ref{SOP-main}), this notation is significantly more useful. Secondly, we only need part of the information about the  tuple $\bar a$ in the type $tp_{\phi,n,A}(\bar a)$,  not all of it—that is, $tp_{\phi,n,A}(\bar a)$ is a partial type, not a complete one.
\end{Remark}

\begin{Definition}
{\em Let \( T \) be a complete \( L \)-theory, \( \phi(x,y) \) an \( L \)-formula, \( n \) a natural number, \( A \subseteq [0,1] \), and \( (a_i) \) a sequence in some model. We define the following:
\begin{itemize}
    \item[(i)] We say that the sequence \( (a_i) \) is a \emph{\(\phi\)-\(n\)-\(A\)-indiscernible sequence} (over the empty set) if for each \( i_1 < \cdots < i_n < \omega \), \( j_1 < \cdots < j_n < \omega \), 
    \[
    tp_{\phi,n,A}(a_{i_1}, \ldots, a_{i_n}) = tp_{\phi,n,A}(a_{j_1}, \ldots, a_{j_n}).
    \]
    \item[(ii)] We say that \( (a_i) \) is a \emph{\(\phi\)-\(n\)-indiscernible sequence} if it is \(\phi\)-\(n\)-\(A\)-indiscernible for \( A = [0,1] \).
    \item[(iii)] We say that \( (a_i) \) is a \emph{\(\phi\)-indiscernible sequence} if it is \(\phi\)-\(n\)-indiscernible for all \( n \in \mathbb{N} \).
\end{itemize}
}
\end{Definition}

In classical logic, the following result is well-known (cf. Theorem~I.2.4 of \cite{Shelah}), although there are technical considerations in the continuous case.

\begin{Fact} \label{Ramsey}  
Let \( T \) be a complete \( L \)-theory, \( \phi(x,y) \) an \( L \)-formula, \( M \) a model of \( T \), and \( n \) a natural number.

\begin{itemize}
    \item[(i)] If \( (a_i) \) is an \emph{infinite} sequence in \( M \), there is an infinite \(\phi\)-indiscernible sequence \( (c_i) \) (possibly in an elementary extension of \( M \)) such that for every finite set \( A \subseteq [0,1] \cap \mathbb{Q} \) and every \( k \in \mathbb{N} \), there is a \(\phi\)-\(k\)-\(A\)-indiscernible subsequence \( (b_i) \subseteq (a_i) \) such that
    \[
    tp_{\phi,k,A}(c_1, \ldots, c_k) = tp_{\phi,k,A}(b_1, \ldots, b_k).
    \]
    \item[(ii)] If \( I \subset J \) are two (infinite) linear ordered sets and \( (a_i)_{i \in I} \) is an \emph{infinite} \(\phi\)-indiscernible sequence in \( M \), there is a sequence \( (b_j)_{j \in J} \) (possibly in an elementary extension of \( M \)) which is a \(\phi\)-indiscernible sequence and $(b_j)_{j\in J}$ has the same $\phi$-$k$-$A$-type as $(a_i)_{i\in I}$ (for every finite $A\subseteq[0,1]\cap\Bbb Q$ and every $k\in \Bbb N$).
\end{itemize}
\end{Fact}

\begin{proof}  
(i): As Ramsey's theorem (Theorem~2.1, appendix 2 of \cite{Shelah}) works for finitely many types (colors), and continuous logic is \( [0,1] \)-valued, we need to initially work with \( A \)-valued logics, where \( A \) is a finite subset of \( [0,1] \), and then generalize it to the \( [0,1] \)-valued case. 

Let \( (A_k) \) be an increasing sequence of finite subsets of rational numbers in \( [0,1] \) such that their union is \( [0,1] \cap \mathbb{Q} \). Specifically, \( A_k \) is finite, \( A_k \subseteq A_{k+1} \), and \( \bigcup_k A_k = [0,1] \cap \mathbb{Q} \).

By induction on \( k \), we obtain a sequence \( ((a_i^k): k < \omega) \) of subsequences of \( (a_i) \) as follows. Let \( (a_i^0) = (a_i) \), and suppose that \( (a_i^{k-1}) \) is given. By Ramsey's theorem, there exists a subsequence \( (a_i^k) \) which is \(\phi\)-\(k\)-\(A_k\)-indiscernible. (Note that Ramsey's theorem works because \( A_k \) and \( k \) are finite, so \(\phi\)-\(k\)-\(A_k\)-types are finite.)

Now, by the compactness theorem, there exists a sequence \( (c_i) \) (possibly in a saturated model) such that for all \( k \in \mathbb{N} \), \( (c_i) \) is \(\phi\)-\(k\)-\(A_k\)-indiscernible, and
\[
tp_{\phi,k,A_k}(c_1, \ldots, c_k) = tp_{\phi,k,A_k}(a_1^k, \ldots, a_k^k).
\]
Since \( \bigcup_k A_k \) is dense in \( [0,1] \cap \mathbb{Q} \), the proof is complete.

\noindent (ii) follows from the compactness theorem.
\end{proof}

The above result can be presented for a more general state, but that is enough for our purpose.

\medskip
In the following, we introduce the notion \(SOP\) for a continuous theory. We remind that the interpretation of \(\phi(x,y)\) is a function into the interval \([0,1]\), and identifying \emph{True} with the value zero and \emph{False} with one, the following notion is a generalization of the \(SOP\) in classical (\(\{0,1\}\)-valued) model theory.

\begin{Definition}[$SOP$ for a continuous theory]  \label{SOP-definition}
{\em  Let \(T\) be a continuous theory and \(\mathcal{U}\) the monster
model of \(T\).

\noindent (i) Let \(\phi(x,y)\) be a formula and \(\epsilon>0\).
 We say the formula \(\phi(x,y)\) (in continuous logic) has the {\em
  \(\epsilon\)-strict order property (\(\epsilon\)-SOP)} if there exists a sequence
\((a_i b_i : i < \omega)\) in  \(\mathcal{U}\) such that for all \(i<j\),
$$\phi(\mathcal{U},a_i)\leqslant\phi(\mathcal{U},a_{i+1})\ \ \ \mbox{ and } \ \ \phi(b_j,a_i)+\epsilon\leqslant\phi(b_{i+1},a_j).$$
(Here, by $\phi(\mathcal{U},a_i)\leqslant\phi(\mathcal{U},a_{i+1})$, we mean $\phi(b,a_i)\leqslant\phi(b,a_{i+1})$ for all $b\in\mathcal U$.)
\medskip\noindent
(ii) We say the formula \(\phi(x,y)\) (in continuous logic) has the
{\em strict order property} (\(SOP\)) if it has \(\epsilon\)-$SOP$ for
some \(\epsilon>0\).

\medskip\noindent
(iii) We say that \(T\) has \(SOP\) if there is a formula \(\phi(x,y)\)
which has \(SOP\). We say that $T$ has $NSOP$ if it does not have   $SOP$.}
\end{Definition}

Although, unlike stability and $NIP$, there is no universal consensus on the definition of $NSOP$ in continuous logic, and several options for this concept exist, Remark~\ref{main remark} and the results of this section convince us that Definition~\ref{SOP-definition} is a suitable option for this concept in continuous logic.

\begin{Remark} \label{main remark}
	(i) It is easy to verify that, in classical logic, a theory has \(SOP\) (in the usual sense) if and only if it has  \(SOP\) (as in Definition~\ref{SOP-definition}).  Indeed, suppose that Definition~\ref{SOP-definition}(i) holds for \(\phi(x,y)\), \((a_i b_i : i < \omega)\), and \(\epsilon > 0\). Then, \(\phi(\mathcal{U}, a_i) \supseteq \phi(\mathcal{U}, a_{i+1})\) for all \(i\). For \(j = i+1\), as the formula is \(\{0,1\}\)-valued, we have \(\epsilon \leq \phi(b_{i+1}, a_{i+1})\), and so \(\models \neg\phi(b_{i+1}, a_{i+1})\). Therefore, \(\phi(b_{i+1}, a_i) \leq 1 - \epsilon\), and so \(\models \phi(b_{i+1}, a_i)\). To summarize, \(b_{i+1} \in \phi(\mathcal{U}, a_i) \setminus \phi(\mathcal{U}, a_{i+1})\) and  \(\phi(\mathcal{U}, a_i) \supsetneq \phi(\mathcal{U}, a_{i+1})\) for all \(i\). The converse is similar.

	\smallskip
	(ii) In \cite[Question~4.14]{Ben2}, Ben Yaacov introduced a notion of \(SOP\) for continuous logic and claimed that Shelah's theorem holds using this notion. As mentioned before, we will argue that this claim does not hold. 
	On the other hand, almost two years after a version of this article was submitted to publish, we learned that James Hanson studied the notion of \(SOP\) in the sense of \cite{Ben2} in his thesis \cite[pages 485-490]{Hanson thesis} using the notion of quasi-metrics.\footnote{It is well-known that quasi-metrics are   the continuous version of pre-orders.}
	In fact, he  showed that Ben Yaacov's notion is equivalent to instability.\footnote{Hanson informed us of this equivalence. Notice that he uses continuous connectives to define a quasi-metric with an infinite chain, even in the classical case. Notice that Definition~3.3 above is also strong because it is defined for `formulas' but not `definable' predicates.}
	
	\smallskip
	(iii) It is easy to show that our notion (Definition~\ref{SOP-definition}) is strictly stronger than instability. Indeed, suppose that the formula \(\phi\) has \(\epsilon\)-SOP as above. That is, there exists a sequence
	\((a_i b_i : i < \omega)\) in  \(\mathcal{U}\) such that for all \(i < j\),
	\(\phi(\mathcal{U}, a_i) \leqslant \phi(\mathcal{U}, a_{i+1})\)  and \(\phi(b_j, a_i) + \epsilon \leqslant \phi(b_{i+1}, a_j)\).
	Set \(\rho(x,y) := \sup_z (\phi(x,y) \dotminus \phi(y,z))\). Then \(\rho(a_i, a_j) = 0\) and \(\rho(a_j, a_i) \geq \epsilon\) for all \(i < j\). Therefore, \(\rho\) is a quasi-metric with an infinite \(\epsilon\)-chain (in the sense of \cite{Hanson thesis}). Define \(\rho'(x,y) := \rho(x,y) \dotplus \cdots \dotplus \rho(x,y)\) (\(n\)-times), for some \(n \geq 1/\epsilon\). Then \(\rho'(a_i, a_j) = 0\) and \(\rho'(a_j, a_i) = 1\) for all \(i < j\), i.e. \(\rho'\) has an infinite \(0\)-chain.\footnote{It can be easily shown that the converse is also true. That is, if a `formula' \(\psi(x,y)\) defines a quasi-metric with infinite \(0\)-chain, then the theory has \(SOP\) (as in Definition~3.3 above).} This shows, by Proposition B.5.5 in \cite{Hanson thesis},  that \(T\) has \(SOP_n\) for all \(n \geq 3\).\footnote{Recall that Hanson showed that a theory is unstable if and only if there exists a definable quasi-metric with an infinite \(1/2\)-chain, and is \(SOP_n\) (\(n \geq 3\)) if there exists a definable quasi-metric with an infinite \(r\)-chain with \(r < \frac{1}{n-1}\). Therefore, the parameter \(r\) is actually important.}  In \cite{CT}, it is shown that the theory of the Urysohn space has \(SOP_n\) (for \(n \geq 3\)).\footnote{They also claim that this theory has \(NSOP_\infty\), but it seems that their proof is not correct.}
	Also, it is not hard to show that this theory has \(NSOP\). (In fact, it has \(SCP\) as in \cite{K5}.) This means  that this theory is \(NSOP\) in our sense, and \(SOP_n\) (for \(n \geq 3\)) in the sense of  \cite{CT}.  

	\smallskip
	(iv) Alternatively, if \(\phi(x,y)\) satisfies the conditions in Definition~\ref{SOP-definition} for some \(\epsilon>0\), then using compactness the following formula implies \(SOP_n\) (for all \(n \geq 3\)): 
	$$\max\left(\sup_x\left(\phi(x,x_1)\dotminus \phi(x,x_2)\right), \ \epsilon\dotminus \sup_x\left(\phi(x,x_2)\dotminus \phi(x,x_1)\right)\right).$$

	\smallskip
	(v) In \cite{K5}, we introduced a weak version of \(SOP\) for continuous logic (denoted by \(wSOP\)) and showed that a variant of Shelah's theorem holds using this notion. This result is not a perfect analogy of Shelah's theorem as the notion \(wSOP\) is stated by a sequence of formulas, but not only one formula.
	(Notice that the notion \(wSOP\) is strictly weaker than \(SOP\) as in Definition~\ref{SOP-definition}.) It is easy to verify  that any alternative of \(SOP\) for continuous logic should be stronger than the notion \(SOP\) introduced in this article, as we use continuous quantifiers and the finer structure of continuous models.

\smallskip
(vi) In a recent paper \cite{KP-random}, it was shown that the Shelah theorem holds for randomization theories, as continuous theories, if Definition~\ref{SOP-definition} is considered for $SOP$ in continuous logic. This is another confirmation of the suitability of this notion.
\end{Remark}

\begin{Definition} \label{uniformly blocked}  
{\em (i) We\footnote{The terminology was borrowed from \cite{Baldwin-report}.} say that {\em the independence property is uniformly blocked for $\phi(x,y)$ on $(a_i)$}, if there is a natural number $N$ such that for each real numbers $r<s$ there is a set $E \subseteq \{1, \ldots, N\}$ (depending only on  $r,s$) such that for each $i_1 < \cdots < i_N < \omega$, the following does not hold:
$$ \exists x \big( \bigwedge_{j \in E} \phi(x, a_{i_j}) \leq r \wedge \bigwedge_{j \in N \setminus E} \phi(x, a_{i_j}) \geq s \big). $$

\medskip
\noindent (ii) We say that {\em $(b_i), (a_i)$ witness the order property with $r < s$} if $\phi(b_j, a_i) \leq r$ for all $i<j<\omega$ and $\phi(b_j, a_i) \geq s$ for all $j\leq i < \omega$.}
\end{Definition}

\medskip
The following result generalizes and refines a crucial theorem of Shelah that every unstable $NIP$ theory has $SOP$.

\begin{Theorem} \label{SOP-main} 
Let $T$ be a complete continuous theory. The following are equivalent:

\medskip\noindent
(i) $T$ has $SOP$.

\medskip\noindent
(ii) There are a formula $\phi(x,y)$, sequences $(a_i)$, $(b_i)$, and real numbers $r < s$ such that

\begin{itemize}
    \item [(1)] The independence property is uniformly blocked for $\phi(x,y)$ on $(a_i)$, and

    \item [(2)] $(b_i), (a_i)$ witness the order property with $r < s$.
\end{itemize}
\end{Theorem}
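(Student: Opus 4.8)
The plan is to prove the two implications separately, with $(\mathrm{i})\Rightarrow(\mathrm{ii})$ being the soft direction read off the $SOP$ configuration itself and $(\mathrm{ii})\Rightarrow(\mathrm{i})$ being the Shelah-like core. In both directions I would first normalize by passing, via the multi-formula version of Fact~\ref{Ramsey}, to an indiscernible sequence of the \emph{pairs} $(a_ib_i)$, so that every value $\phi(b_j,a_i)$ and every relevant $\phi$- or $\tilde\phi$-$n$-$A$-condition over the $a_i$'s depends only on the order type of the indices. This is legitimate because both the order property and the uniformly-blocked condition are expressed by $\phi$- and $\tilde\phi$-conditions with rational thresholds (the blocking being the $\forall$-form dual to the forbidden $\exists$-pattern, and blocking for all real $r<s$ being equivalent, by density, to blocking for all rational $r<s$), hence are preserved when the original sequence is replaced by an indiscernible one matching a subsequence.

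For $(\mathrm{i})\Rightarrow(\mathrm{ii})$, fix an $\epsilon$-$SOP$ witness $(a_ib_i)$ for $\phi$, so that $\phi(x,a_i)\le\phi(x,a_{i+1})$ for all $x$ and $\phi(b_j,a_i)+\epsilon\le\phi(b_{i+1},a_j)$ for $i<j$. The pointwise monotonicity $\phi(x,a_i)\le\phi(x,a_j)$ for $i\le j$ yields clause~(1) at once: with $N=2$ and $E=\{2\}$ any $x$ with $\phi(x,a_{i_2})\le r$ satisfies $\phi(x,a_{i_1})\le\phi(x,a_{i_2})\le r<s$, contradicting $\phi(x,a_{i_1})\ge s$, so this pattern fails for \emph{all} $r<s$; since the nonexistence statements are $\tilde\phi$-$2$-$A$-conditions, (1) survives the passage to indiscernibles. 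For clause~(2), on the indiscernible pairs set $\alpha=\phi(b_i,a_j)$ and $\beta=\phi(b_j,a_i)$ for $i<j$; reading the $\epsilon$-gap at indices with $i+1<j$ gives $\beta+\epsilon\le\alpha$, and then $r=\beta<\alpha=s$ witnesses the order property for $(b_i),(a_i)$.

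For $(\mathrm{ii})\Rightarrow(\mathrm{i})$ I would first treat the guiding case $N=2$. A short computation shows that blocking for all $r'<s'$ with $|E|,N$ as small as $2$ is equivalent to $\phi(x,a_i)$ being monotone in $i$; after possibly reversing the sequence we may take it decreasing, and then, with the pre-order $\psi(x,y):=\sup_z\big(\max(\phi(z,x)-\phi(z,y),0)\big)$ of the Remark following Definition~\ref{SOP-definition}, the order property gives $\psi(a_i,a_j)\ge s-r$ for $i<j$ (evaluate at a $b_m$ with $i<m<j$) while $\psi(a_j,a_i)=0$. Thinning so that chosen indices differ by at least two produces infinite $(s-r)$-chains, and the Remark's criterion delivers $SOP$. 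The content of the theorem is the reduction of an arbitrary forbidden $N$-pattern $E$ to this monotone situation: the order property makes every threshold pattern realizable by the $b_m$'s, so the single forbidden $\chi_E$ is necessarily non-monotone, and one exploits its alternation to manufacture a derived formula $\psi(\bar y,\bar y')$ — a $\sup_x$ of the continuous combination of the instances $\phi(x,a_i)$ prescribed by $E$, with parameters running over blocks of $N$ consecutive members of the sequence — which has a genuinely forbidden $2$-pattern (hence is monotone along the blocks) while retaining a uniform gap $\epsilon=\epsilon(r,s,N)>0$; applying the $N=2$ argument to $\psi$ then gives $SOP$.

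The main obstacle is exactly this last reduction: extracting from one forbidden combinatorial pattern $E$ on $N$ positions a definable pre-order with a \emph{uniform} gap. In the classical $\{0,1\}$-valued case this is the combinatorial heart of Shelah's theorem (selecting a critical forbidden pattern and bounding alternations), and the continuous setting adds the genuinely new difficulty that the gap $s-r$ must be propagated through the $\sup_x$ combination without being washed out, so that the resulting order is $\epsilon$-strict rather than merely strict. I expect this quantitative control of $\epsilon$ — not any compactness or point-set input — to be where the real work lies; the indiscernibility normalization and the two-point monotonicity computations are routine by comparison.
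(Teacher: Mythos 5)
Your direction (i)$\Rightarrow$(ii) is correct and essentially the paper's own argument (monotonicity of $\phi(x,a_i)$ kills every two-element pattern, and the $\epsilon$-gap gives the order property after thinning), and your normalization via $\tilde\phi$-indiscernibles matches the paper's use of Fact~\ref{Ramsey}. The problem is the hard direction. Your treatment of the case $N=2$ is fine, but the reduction of an arbitrary forbidden $N$-pattern to that case is exactly the content of the theorem, and you do not carry it out: you propose to build a derived formula $\psi(\bar y,\bar y')$ as a $\sup_x$ of a combination of instances of $\phi$ over blocks of $N$ consecutive parameters, assert without argument that it has a forbidden $2$-pattern with a uniform gap, and then explicitly concede that ``this quantitative control of $\epsilon$\dots{} is where the real work lies.'' That is an acknowledged gap, not a proof; nothing in the proposal shows that the gap $s-r$ survives the $\sup_x$ combination, nor even that the proposed $\psi$ is well defined as a formula with a single parameter variable rather than a formula depending on the ambient sequence.

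For comparison, the paper closes this gap differently and quite concretely. It runs the classical adjacent-swap argument on an inconsistent pattern $\bigwedge_{i<N}\phi(x,a_i)^{\eta(i)}$ (with $\varphi^1$ meaning $\varphi\le r$ and $\varphi^0$ meaning $\varphi\ge s'$) to locate a critical position $i_0$ and a residue formula $\varphi_{s'}(x,\bar a)$ such that $(*)$ is inconsistent while $(**)$ is consistent. The genuinely continuous-logic step --- the one your sketch is missing --- is a pigeonhole: there are only finitely many patterns $(\eta_0,i_0)$ of length $N$ but infinitely many thresholds $s'=r+\frac1n$, so one fixed pattern $\varphi_{s_0}$ works for a sequence $s'\searrow r$; inconsistency of $(I)$ persists for all these $s'$ while consistency at $s_0$ is kept, and after extending $(a_i)$ to a $\Qq$-indexed $\tilde\phi$-indiscernible sequence the formula $\psi(x,y)=\varphi_{s_0}(x,\bar a)\wedge\phi(x,y)$ has $SOP$ with gap $s_0-r$. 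If you want to salvage your write-up, you should replace the block-formula reduction by this pigeonhole-on-thresholds argument (or supply a complete proof that your $\sup_x$ construction preserves a uniform gap, which is not at all evident).
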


\noindent Before giving the proof, let us remark:

\begin{Remark} \label{main-remark}
The condition (ii)(1) is related to a subclass of Baire-1 functions. (See Lemma~\ref{bounded variation-continuous} below.) This property is local and, in one sense, weaker than $NIP$ (it can hold for $IP$ formulas) and, in another sense, stronger than $NIP$ (not every $NIP$ formula satisfies it). In contrast with continuous logic, $NIP$ implies (ii)(1) in the classical case. We will see shortly that the latter is the reason why one should not expect a result similar to Shelah's theorem for continuous logic.

As Theorem~\ref{SOP-main} is a generalization of Theorem~2.6 of~\cite{K-Baire}, for easier reading, see the proof of the latter. The proofs are basically the same, although the continuous case is more complex and has one more key point.
\end{Remark}

\begin{proof}[Proof of Theorem \ref{SOP-main}]
(i)~$\Rightarrow$~(ii):
Suppose that $\phi(x,y)$ has $\epsilon$-$SOP$ with $(b_i), (a_i)$ as witnesses.
Initially, using Ramsey's theorem and compactness, we can assume that the numbers $r<s$ (with $\epsilon=s-r$) exist such that 
$\phi(b_j, a_i) \leq r$ for all $i<j$ and $\phi(b_j, a_i) \geq s$ for all $j\leq i$.
(In fact, a sequence $(b_i',a_i':i<\omega)$ can be found such that  simultaneously witness $SOP$ and there exist such $r<s$ satisfying the $OP$, but we continue to use the notation $(b_i,a_i:i<\omega)$. For the proof of this, refer to Lemma~7.2 in \cite{BU}.) Therefore, $(b_i)$, $(a_i)$ witness the order property for $\phi(x,y)$ with $r<s$.
 Since the sequence $(\phi(x,a_i))$ is increasing, let $N = \{1, 2\}$ and $E = \{2\}$. It is easy to see that $E$ is a witness to uniform $IP$ blocking.
 Therefore, the conditions (1) and (2) hold.

\medskip\noindent
(ii)~$\Rightarrow$~(i)
Similar to the argument in the proof of  Fact~\ref{Ramsey}, it can be shown that there exists a sequence $(b_i',a_i':i<\omega)$ corresponding to tuples of $x$ and $y$ such that both instances of condition (ii) are satisfied, and, moreover, $(a_i' : i < \omega)$ is $\tilde\phi$-indiscernible. (For this purpose, one must employ Ramsey's theorem and compactness for infinitely many tuples of $x$ and $y$ simultaneously.)
For the sake of simplicity and to preserve the notation, we continue to assume that the sequence $(b_i,a_i:i<\omega)$ satisfies these properties.

As the independence property is uniformly blocked for $\phi(x,y)$ on $(a_i)$, there is a natural number $N$ such that the conditions of Definition~\ref{uniformly blocked} hold for $(a_i)$.
Therefore, for any $s'$ in the interval $(r,s)$, there are some integer $n$ and $\eta : N \rightarrow \{0,1\}$ such that $\bigwedge_{i < N} \phi(x, a_i)^{\eta(i)}$ is inconsistent, where for a formula $\varphi$, we use the notation $\varphi^1$ to mean $\varphi \leq r$ and $\varphi^0$ to mean $\varphi \geq s'$. (Recall that unlike classical model theory, in continuous logic True is 0 and False is 1.) Starting with that formula, we change one by one instances of $\phi(x, a_i) \geq s' \wedge \phi(x, a_{i+1}) \leq r$ to $\phi(x, a_i) \leq r \wedge \phi(x, a_{i+1}) \geq s'$. Finally, we arrive at a formula of the form $\bigwedge_{i < n} \phi(x, a_i) \leq r \wedge \bigwedge_{n \leq i < N} \phi(x, a_i) \geq s'$. The tuple $b_n$ satisfies that formula.

Therefore, for such $r < s'$, there is some $i_0 < N$, $\eta_0 : N \rightarrow \{0,1\}$ such that
\[
(*) \ \ \ \ \bigwedge_{i \neq i_0, i_0 + 1} \phi(x, a_i)^{\eta_0(i)} \wedge \phi(x, a_{i_0}) \geq s' \wedge \phi(x, a_{i_0 + 1}) \leq r
\]
is inconsistent, but
\[
(**) \ \ \ \ \bigwedge_{i \neq i_0, i_0 + 1} \phi(x, a_i)^{\eta_0(i)} \wedge \phi(x, a_{i_0}) \leq r \wedge \phi(x, a_{i_0 + 1}) \geq s'
\]
is consistent. Let us define $\varphi_{s'}(x, \bar{a}) = \bigwedge_{i \neq i_0, i_0 + 1} \phi(x, a_i)^{\eta_0(i)}$.

The new point is that by (ii)(1), since the number of Boolean combinations of the length $N$ of formulas is finite, there are {\em infinitely} many $r + \frac{1}{n} = s' < s$ such that there exists a fixed pattern similar to $(*)$, $(**)$ where $(*)$ is inconsistent and $(**)$ is consistent for all such $s'$'s. For simplicity, we can assume from now on that $(*)$, $(**)$ is this pattern. Also, we fix an $s_0$ with this pattern and consider $\varphi_{s_0}(x, \bar{a})$ defined as above. 

Note that for these $s'$'s if $s' \searrow r$, then the statement
\[
(I) \ \ \ \ \varphi_{s_0}(x, \bar{a}) \wedge \phi(x, a_{i_0}) \geq s' \wedge \phi(x, a_{i_0 + 1}) \leq r
\]
is still inconsistent. (For inconsistency of $(I)$, note that for $s' < s_0$, since the pattern is fixed, $\varphi_{s'}(x, \bar{a}) \wedge \phi(x, a_{i_0}) \geq s' \wedge \phi(x, a_{i_0 + 1}) \leq r$ is inconsistent, and therefore it is easy to check that $\varphi_{s_0}(x, \bar{a}) \wedge \phi(x, a_{i_0}) \geq s' \wedge \phi(x, a_{i_0 + 1}) \leq r$ is inconsistent. Of course, in continuous logic, we cannot conclude that $\varphi_{s_0}(x, \bar{a}) \wedge \phi(x, a_{i_0}) \geq r \wedge \phi(x, a_{i_0 + 1}) \leq r$ is inconsistent.)

By Fact~\ref{Ramsey}(ii), increase the sequence $(a_i : i < \omega)$ to an $\tilde\phi$-indiscernible sequence $(a_i : i \in \mathbb{Q})$. Then for $i_0 \leq i < i' \leq i_0 + 1$, the formula $\varphi_{s_0}(x, \bar{a}) \wedge \phi(x, a_i) \leq r \wedge \phi(x, a_{i'}) \geq s_0$ is consistent, but $\varphi_{s_0}(x, \bar{a}) \wedge \phi(x, a_i) \geq s' \wedge \phi(x, a_{i'}) \leq r$ is inconsistent, for all $s' > r$ where $s_0 > s'$. Thus, the formula $\psi(x,y) = \varphi_{s_0}(x, \bar{a}) \wedge \phi(x,y)$ has $\epsilon$-$SOP$ (with $\epsilon=s_0-r$).
\end{proof}

We aim to provide a characterization of $SOP$ in terms of function spaces. First, we introduce a lemma that generalizes and abstracts the alternation number in model theory for continuous logic. This lemma is analogous to Lemma~2.8 in \cite{K-Baire}.

\begin{Lemma} \label{bounded variation-continuous}  
Let $(f_i)$ be a sequence of $[0,1]$-valued functions on a set $X$. Suppose that the independence property is uniformly blocked for $(f_i)$. That is, there exists a natural number $N$
 such that for each real numbers $r<s$ 
 there is 
  a set $E \subseteq \{1, \ldots, N\}$  (depending only on $r,s$) such that for each $i_1 < \cdots < i_N < \omega$,
 we have 
\[
\bigcap_{j \in E} [f_{i_j} \leq r] \cap \bigcap_{j \in N \setminus E} [f_{i_j} \geq s] = \emptyset.
\]
(Here $[f \leq r] = \{x : f(x) \leq r\}$ and $[f \geq s] = \{x : f(x) \geq s\}$.) Then the following properties hold:

\noindent (i) There exists a real number $C$ such that for all $x \in X$,
\[
\sum_{i=1}^\infty |f_i(x) - f_{i+1}(x)| \leq C.
\]

\noindent (ii) Suppose further that $X$ is a compact metric space and that the $f_n$'s are continuous. Then the sequence $(f_i)$ converges pointwise to a function $f$ which is $DBSC$ (Difference of Bounded Semi-Continuous functions).
\end{Lemma}

\begin{proof}
(i): First, we consider the case where the functions are discrete-valued. For $n \in \mathbb{N}$, define
\[
f_i^n(x) = \frac{1}{n} \left\lfloor n f_i(x) \right\rfloor \quad \text{for all } i < \omega,
\]
where $\left\lfloor x \right\rfloor$ is the floor function, i.e., the greatest integer less than or equal to $x$. By the assumption, it is straightforward to verify that for all $x \in X$ and $n \in \mathbb{N}$, 
\[
\sum_{i=1}^\infty |f_i^n(x) - f_{i+1}^n(x)| \leq 2N - 2.
\]
This was established for $n = 1$ in Lemma~2.8 of  \cite{K-Baire}  and for an arbitrary $n$, the proof is essentially a reduction to the case $n=1$.
We also prove it for $n = 2$, and for an arbitrary $n$, it is done similarly.\footnote{In fact, the proof for an arbitrary $n$ is similar to the case $n=2$, and we chose this case only for simplicity.}  
The reader can easily verify that the following equality holds for any $x\in X$.\footnote{For easier visualization and intuitive understanding, one can use a diagram that oscillates in a zigzag pattern.}
$$|f_i^2(x)-f_{i+1}^2(x)|=|\min(f_i^2(x),1/2)-\min(f_{i+1}^2(x),1/2)|+$$
$$~~~~~~~~~~~~~~~~~~~~~~~~|\max(f_i^2(x),1/2)-\max(f_{i+1}^2(x),1/2)|.$$
On the other hand, according to the assumptions and using Lemma 2.8 in \cite{K-Baire}, we have:
 $\sum_{k=1}^\infty|\min(f_i^2(x),1/2)-\min(f_{i+1}^2(x),1/2)|\leq(\frac{2N-2}{2})$
 and $\sum_{k=1}^\infty|\max(f_i^2(x),1/2)-\max(f_{i+1}^2(x),1/2)|\leq(\frac{2N-2}{2})$.
This leads to the desired inequality below that we aim to establish.
\[
\sum_{i=1}^\infty |f_i^2(x) - f_{i+1}^2(x)| \leq 2N - 2.
\]

 Therefore, for all $k \in \mathbb{N}$, we have
\[
\sum_{i=1}^k |f_i(x) - f_{i+1}(x)| = \lim_{n \to \infty} \sum_{i=1}^k |f_i^n(x) - f_{i+1}^n(x)| \leq 2N - 2.
\]
Thus, 
\[
\sum_{i=1}^\infty |f_i(x) - f_{i+1}(x)| \leq 2N - 2.
\]

(ii): By Fact~\ref{properties}(i), $f \in DBSC$ if and only if there exists a constant $C > 0$ and a uniformly bounded sequence of continuous functions $(f_i)$ such that $f_i \to f$ pointwise and 
\[
\sum_{i=1}^\infty |f_i(x) - f_{i+1}(x)| \leq C \quad \text{for all } x \in X.
\]
By (i), the latter condition holds, so the limit function $f$ is indeed $DBSC$. (We observe that, given the fact that the independence property is uniformly blocked for $(f_i)$, this sequence clearly converges to a function, which we denote by $f$. This follows from Rosenthal’s theorem (\cite{Ros}); compare the equivalence (i)~$\iff$~(iii) of Theorem~\ref{NIP=Baire-1/2} below.)
\end{proof}

Note that, in contrast to \cite[Lemma~2.8]{K-Baire}, we cannot expect a converse to the direction (i)~$\Rightarrow$~(ii) of Lemma~\ref{bounded variation-continuous}.

\medskip
Let $A$ be a set of parameters in the monster model $\cal U$ and $\phi(x,y)$ a formula. For a parameter $a\in A$, recall that the continuous function $\phi(x,a):S_{\phi}(A)\to[0,1]$ is defined by $p\mapsto\phi(b,a)$ for some (any) $b\models p$.
\begin{Remark} \label{A=U} Consider the function $\phi(x,a):S_\phi({\cal U})\to[0,1]$ as above. For every set $A$ contains $a$, we can replace this function with a function $\phi(x,a):S_\phi(A)\to[0,1]$, which is defined by  $p\mapsto\phi(b,a)$ for some (any) $b\models p$.
In other words, rather than considering the function on $S_\phi({\cal U})$, we may consider it on $S_\phi(A)$ instead.
Indeed, this transformation preserves all the information encoded in the function. For instance, assuming that $A$ is countable, we may exploit the fact that $S_\phi(A)$ is metrizable. (Recall that a compact and second-countable space is metrizable.)
This fact will be utilized in the subsequent developments of this section.
\end{Remark}

We now provide a characterization of $SOP$ in terms of function spaces.

\begin{Theorem} \label{SOP characterization}
Let $T$ be a complete continuous theory and $\mathcal{U}$ its monster model. The following are equivalent:

\noindent (i) $T$ has $SOP$.

\noindent (ii) There exist a formula $\phi(x,y)$ and a sequence $(a_i)$ such that the independence property is uniformly blocked for $\phi(x,y)$ on $(a_i)$, and the sequence 
\[
\left(\phi(x, a_i) : S_\phi(\mathcal{U}) \to [0,1] \right)
\]
converges to a $DBSC$ function which is not continuous.
\end{Theorem}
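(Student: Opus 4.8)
The plan is to prove Theorem~\ref{SOP characterization} by tying it to the two theorems already established, namely Theorem~\ref{SOP-main} (the characterization of $SOP$ via the order property together with uniform blocking of $IP$) and Lemma~\ref{bounded variation-continuous} (which says that uniform blocking forces the sequence of formula-functions to converge to a $DBSC$ limit). The direction (ii)~$\Rightarrow$~(i) looks the easier one and I would do it first: if the independence property is uniformly blocked for $\phi(x,y)$ on $(a_i)$ and the functions $\phi(x,a_i)$ converge pointwise on $S_\phi(\mathcal U)$ to a \emph{discontinuous} $DBSC$ function, then I want to extract from the failure of continuity a pair of witnesses $(b_i)$ and reals $r<s$ realizing the order property, so that Theorem~\ref{SOP-main}(ii) is satisfied and hence $T$ has $SOP$.

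The key mechanism for (ii)~$\Rightarrow$~(i) is the Grothendieck criterion, Fact~\ref{EG}. Discontinuity of the limit means precisely that $A=\{\phi(a_i,y)\}$ (equivalently the dual family $\{\phi(x,a_i)\}$) is \emph{not} relatively compact in $C(X)$, so Fact~\ref{EG} fails. Reading off the negation of Fact~\ref{EG}(i), after passing to a subsequence I obtain reals $r<s$ and a sequence $(b_j)$ such that one of the two ``double-ordering'' patterns holds, i.e. $\phi(b_j,a_i)\leq r$ and $\phi(b_i,a_j)\geq s$ for all $i<j$ (or the symmetric version). This is exactly the statement that $(b_i),(a_i)$ witness the order property with $r<s$ in the sense of Definition~\ref{uniformly blocked}(ii). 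Combined with the hypothesized uniform blocking of $IP$, both clauses (1) and (2) of Theorem~\ref{SOP-main}(ii) hold, and I conclude $T$ has $SOP$.

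For the harder direction (i)~$\Rightarrow$~(ii) I would start from Theorem~\ref{SOP-main}: $SOP$ gives a formula $\phi$, sequences $(a_i),(b_i)$, and $r<s$ with uniform blocking and the order property. The uniform-blocking hypothesis feeds directly into Lemma~\ref{bounded variation-continuous}: taking $X=S_\phi(\mathcal U)$ and $f_i=\phi(x,a_i)$ (continuous functions on the compact metric type space, using the separability/Polish remark in the footnote), part~(ii) of the Lemma says that $(f_i)$ converges pointwise to a $DBSC$ function $f$. It remains to guarantee that $f$ is \emph{not} continuous, and here I would again invoke Grothendieck's criterion: the order property supplied by Theorem~\ref{SOP-main}(2) provides the witnessing sequence $(b_j)$ and the separation $r<s$ which is precisely the configuration that violates Fact~\ref{EG}(i); therefore $A$ is not relatively compact in $C(X)$, equivalently the pointwise limit $f$ fails to be continuous. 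One may need to pass to a subsequence or adjust $r,s$ slightly so that the limit genuinely exists and is discontinuous, but the order property persists under taking subsequences.

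\textbf{The main obstacle} I expect is the bookkeeping around whether a single subsequence can simultaneously (a) converge pointwise, (b) retain the uniform blocking constant $N$, and (c) keep witnessing the order property with fixed reals $r<s$ that are responsible for the discontinuity. Uniform blocking is preserved under subsequences for free (it is a $\forall$-statement over increasing tuples), and the order property is likewise hereditary, so the delicate point is only to arrange pointwise convergence without destroying the $r<s$ separation that certifies discontinuity via Fact~\ref{EG}. I would handle this by first passing to a convergent subsequence of $(a_i)$ (possible since $X$ is compact metric, hence sequentially compact in the pointwise topology on the relevant closure) and then verifying that the limit's discontinuity is still detected by the surviving order-property witnesses; the careful matching of the separation thresholds in Fact~\ref{EG} with the $r,s$ coming from Theorem~\ref{SOP-main} is where the argument must be executed with some care.
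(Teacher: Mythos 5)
Your proposal is correct and follows essentially the same route as the paper: both directions are assembled from Theorem~\ref{SOP-main}, Lemma~\ref{bounded variation-continuous}, and Grothendieck's criterion (Fact~\ref{EG}), exactly as in the paper's proof. The only difference is that your worry about extracting a pointwise-convergent subsequence in (i)~$\Rightarrow$~(ii) is unnecessary: the bounded-variation estimate of Lemma~\ref{bounded variation-continuous}(i) already forces the full sequence to converge pointwise (and, as the paper notes, in the $SOP$ case the sequence $\phi(x,a_i)$ is increasing), so no subsequence passage is needed.
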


\begin{proof}
(i)~$\Rightarrow$~(ii): Suppose that $T$ has $SOP$, witnessed by the formula $\phi(x,y)$ and sequences $(b_i),(a_i)$. Then the independence property is uniformly blocked for $\phi(x,y)$ on $(a_i)$, with $N = \{1, 2\}$ and $E = \{2\}$. By Lemma~\ref{bounded variation-continuous} and Remark~\ref{A=U}, the sequence
\[
\left(\phi(x, a_i) : S_\phi(\mathcal{U}) \to [0,1] \right)
\]
converges to a function $f$ that is $DBSC$.\footnote{If $A=\{a_i:i<\omega\}$, then  $X=S_\phi(A)$ is a compact metric space, and we can assume that the functions $\phi(x,a_i)$'s are on $X$.} Since $(b_i), (a_i)$ witness the order property\footnote{If needed, one may employ indiscernible sequences, in a manner analogous to the proof of Theorem~\ref{SOP-main}.}, and by Grothendieck's criterion (Fact~\ref{EG}), the non-order property and continuity are equivalent, it follows that $f$ is not continuous.

(ii)~$\Rightarrow$~(i): This follows from Theorem~\ref{SOP-main}, 
 Remark~\ref{A=U},
 and Grothendieck's criterion (Fact~\ref{EG}).
\end{proof}

Recall that the unit vector basis $(e_n)$ of $c_0$ is defined by
\[
e_n = (0, \ldots, 0, 1, 0, \ldots),
\]
where the 1 occurs in the $n$-th coordinate. The summing basis $(s_n)$ is defined by
\[
s_n = e_1 + e_2 + \cdots + e_n.
\]

\begin{Proposition} \label{SOP->c0}
Let $T$ be a complete theory and $\mathcal{U}$ the monster model. If $T$ has $SOP$, then there exists a formula $\phi(x,y)$ and a sequence $(a_i)$ such that the sequence
\[
\left( \phi(x, a_i) : S_\phi(\mathcal{U}) \to [0,1] \right)
\]
is equivalent to the summing basis for $c_0$.
\end{Proposition}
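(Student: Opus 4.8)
The plan is to reduce to an increasing, $[0,1]$-valued sequence of continuous functions carrying the order property, then exploit a feature unavailable in classical logic: normalize the formula by a continuous connective so that on the order-witnessing points it becomes an \emph{exact} step function, after which both summing-basis inequalities come out with constant $1$. First I would invoke Theorem~\ref{SOP-main}: from $SOP$ I obtain a formula $\phi(x,y)$, sequences $(a_i),(b_i)$ and reals $r<s$ such that $(b_i),(a_i)$ witness the order property with $r<s$ in the sense of Definition~\ref{uniformly blocked}(ii), and (recalling the monotonicity built into Definition~\ref{SOP-definition}) the functions $f_i:=\phi(\,\cdot\,,a_i)$ on $X:=S_\phi(\mathcal U)$ form an increasing sequence in $C(X)$ with values in $[0,1]$. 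Writing $f_i(b_n)$ for the value of $f_i$ at $\mathrm{tp}_\phi(b_n/\mathcal U)\in X$, we then have $f_i(b_n)\le r$ whenever $i<n$ and $f_i(b_n)\ge s$ whenever $i>n$.

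The key device is the normalization. Since continuous logic is closed under continuous connectives, $\hat\phi(x,y):=\min\big(1,\,\max\big(0,\,(\phi(x,y)-r)/(s-r)\big)\big)$ is again a formula, and the functions $g_i:=\hat\phi(\,\cdot\,,a_i)$ are still increasing and $[0,1]$-valued; but now the clamping forces $g_i(b_n)=0$ \emph{exactly} when $i<n$ and $g_i(b_n)=1$ \emph{exactly} when $i>n$. I would then thin out the sequence to kill the one remaining ambiguous (``diagonal'') value: choose indices $m_1<p_1<m_2<p_2<\cdots$, set $h_k:=g_{m_k}=\hat\phi(\,\cdot\,,a_{m_k})$, and take evaluation points $z_l:=\mathrm{tp}_{\hat\phi}(b_{p_l}/\mathcal U)$. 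Because $m_l<p_l<m_{l+1}$ and $p_l$ coincides with no $m_k$, one gets $h_k(z_l)=0$ for $k\le l$ and $h_k(z_l)=1$ for $k>l$, with no intermediate values; choosing also $z_0:=\mathrm{tp}_{\hat\phi}(b_{p_0}/\mathcal U)$ for some $p_0<m_1$ gives $h_k(z_0)=1$ for all $k$.

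From here the two estimates are immediate. For any scalars $c_1,\dots,c_M$, evaluation at $z_l$ gives $\sum_k c_k h_k(z_l)=\sum_{k>l}c_k$, so $\big\|\sum_k c_k h_k\big\|_\infty\ge\sup_{l\ge 0}\big|\sum_{k>l}c_k\big|=\sup_{l\ge 1}\big|\sum_{k\ge l}c_k\big|$. Conversely, a one-line Abel summation with $S_l:=\sum_{k\ge l}c_k$ and $h_0:=0$ gives $\sum_k c_k h_k=\sum_l S_l(h_l-h_{l-1})$, where $h_l-h_{l-1}\ge 0$ and $\sum_l(h_l-h_{l-1})=h_M\le 1$, whence $\big\|\sum_k c_k h_k\big\|_\infty\le\sup_l|S_l|$. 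Combining the two, $\big\|\sum_k c_k h_k\big\|_\infty=\sup_l\big|\sum_{k\ge l}c_k\big|$, which is precisely the $c_0$-norm of $\sum_l c_l s_l$ for the summing basis $(s_l)$. Thus $\hat\phi$ together with the subsequence $(a_{m_k})$ witnesses the conclusion, in fact isometrically.

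The only genuine obstacle is that the raw order property delivers merely $f_i(b_n)\le r$ and $f_i(b_n)\ge s$; the ``small'' values $\le r$, weighted by signed coefficients, are \emph{not} controlled by the summing-basis norm (e.g.\ for alternating $c_k$ the quantity $\sum|c_k|$ is unbounded while $\sup_l|S_l|$ stays small), so a naive evaluation fails. Collapsing these values to exact $0$ and $1$ via the continuous connective $\hat\phi$, and sidestepping the single uncontrolled diagonal value by interleaving the indices $p_l$ strictly between the chosen $m_k$, is exactly what removes this error term and yields the clean step function. Everything else is the routine Abel transform.
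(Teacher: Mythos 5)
Your proof is correct, and its skeleton matches the paper's: both arguments reduce $SOP$ to an increasing sequence $(\phi(x,a_i))$ of $[0,1]$-valued continuous functions carrying the order property, and both get the upper estimate $\big\|\sum_i c_i\phi(x,a_i)\big\|_\infty\leq\big\|\sum_i c_is_i\big\|_\infty$ from exactly the Abel summation against the nonnegative increments of a $[0,1]$-bounded increasing sequence. Where you genuinely diverge is on the lower estimate, which is the half that actually uses the order property: the paper does not write it out, but instead notes that the pointwise limit is semi-continuous and non-continuous and defers to Lemma~3.4 of \cite{HOR}, whereas you make the argument self-contained by clamping $\phi$ through the connective $t\mapsto\min\big(1,\max(0,(t-r)/(s-r))\big)$ so that the order-property pattern becomes an exact $\{0,1\}$ step pattern, interleaving the evaluation indices to dodge the uncontrolled diagonal value, and then reading off $\big\|\sum_k c_kh_k\big\|_\infty=\sup_l\big|\sum_{k\geq l}c_k\big|$ on the nose. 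This buys an isometric copy of the summing basis rather than a mere equivalence and removes the appeal to the Banach-space literature; the price is that you need the fixed thresholds $r<s$ of Theorem~\ref{SOP-main}(ii)(2) \emph{simultaneously} with the monotonicity of $(\phi(x,a_i))$ from Definition~\ref{SOP-definition}, which is legitimate since the proof of Theorem~\ref{SOP-main} retains (a subsequence of) the original $SOP$ witnesses and both properties pass to subsequences. Your conclusion is witnessed by the formula $\hat\phi$ and the subsequence $(a_{m_k})$, which is exactly what the proposition asks for, as continuous connectives applied to formulas are again formulas.
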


\begin{proof}
By Theorem~\ref{SOP-main}, there exist $\epsilon > 0$ and a sequence $(a_i, b_i : i < \omega)$ in the monster model $\mathcal{U}$ such that for all $i < j$, we have
\[
\phi(\mathcal{U}, a_i) \leq \phi(\mathcal{U}, a_{i+1}) \quad \text{and} \quad \phi(b_j, a_i) + \epsilon \leq \phi(b_{i+1}, a_j).
\]
Since  $\left( \phi(x, a_i) : S_\phi(\mathcal{U}) \to [0,1] \right)$ is an  increasing sequence of continuous function, its limit is (lower) semi-continuous. Moreover, since the sequence has the order property\footnote{Once more, if needed, one may resort to the use of indiscernible sequences.}, the limit is non-continuous, by Grothendieck's criterion. Now, one can either use Lemma~3.4 in \cite{HOR} or directly prove the desired result. Indeed, it is easy to verify that for every $k$ and scalars $c_1, \ldots, c_k$, we have
\[
\left\| \sum_{i=1}^k c_i \phi(x, a_i) \right\|_\infty \leq \left\| \sum_{i=1}^k c_i s_i \right\|_\infty,
\]
where $(s_i)$ denotes the summing basis of $c_0$. (In all instances, including the application of  \cite[Lemma 3.4]{HOR}, a suitable subsequence may be employed if required.)
\end{proof}

\begin{Remark}
It seems that one cannot expect a converse to Proposition~\ref{SOP->c0}.
 Indeed, by Theorem~1.1 of \cite{Farmaki}, if a sequence $(f_i)$ is equivalent to the summing basis of $c_0$, then its limit is a Baire-1/4 function. Recall that, by Lemma~\ref{bounded variation-continuous} above, in the $SOP$ case, the limit is $DBSC$, and this class is a {\em proper} subclass of Baire-1/4 functions. 

Also, it does not seem that a weaker property than the property (ii) in Theorem~\ref{SOP-main} implies $SOP$. In contrast with $SOP$ (for a theory), we will shortly see that $NIP$ is {\em equivalent to} having no copy of $\ell_1$. Perhaps one of the reasons why Rosenthal's $\ell_1$-theorem \cite{Ros} is more important than his $c_0$-theorem \cite{Ros-c0} is that the former is a {\em first-order property}, while the latter is not.
\end{Remark}

In \cite{K-Baire}, the notion of ``$NSOP$ in a model" was introduced for classical logic. It is natural to generalize it to continuous logic.

\begin{Definition}[$NSOP$ in a model] \label{SOP in model}
Let $T$ be a complete $L$-theory, $\phi(x,y)$ an $L$-formula, and $M$ a model of $T$.

\noindent (i) A set $\{a_{i} : i < \kappa\}$ of $l(y)$-tuples from $M$ is said to be a {\em $SOP$-guarantee for $\phi(x,y)$} if the following two conditions hold simultaneously:
\begin{itemize}
    \item [(1)] The independence property is uniformly blocked for $\phi(x,y)$ on $(a_i : i < \kappa)$.
    \item [(2)] There exist $(b_i : i < \kappa)$ (in the monster model) and real numbers $r < s$ such that $(b_i), (a_i)$ witness the order property with $r < s$.
\end{itemize}

\noindent (ii) Let $A$ be a set of $l(x)$-tuples from $M$. Then $\phi(x,y)$ has a {\em $SOP$-guarantee in $A$} if there exists a countably infinite sequence $(a_{i} : i < \omega)$ of elements of $A$ which is a $SOP$-guarantee for $\phi(x,y)$.

\noindent (iii) Let $A$ be a set of $l(x)$-tuples in $M$. We say that $\phi(x,y)$  {\em rejects a $SOP$-guarantee in $A$} if it does not have a $SOP$-guarantee in $A$.

\noindent (iv) $\phi(x,y)$  {\em rejects a  $SOP$-guarantee in $M$} if it rejects a  $SOP$-guarantee in the set of $l(x)$-tuples from $M$.
\end{Definition}

The following is Remark~2.13 from \cite{K-Baire} for classical first-order logic, which also holds for continuous logic. For the sake of completeness, we present it here.

\begin{Remark} \label{remark sop in set}
Let $T$ be a complete (continuous) $L$-theory, $\phi(x,y)$ an $L$-formula, $M$ a model of $T$, and $A$ a subset of $M$.

\noindent (i) If $\phi$ has a $SOP$-guarantee in $A$, then a Boolean combination of instances of $\phi$ has $SOP$ for the theory $T$. Of course, if $\phi$ has $SOP$ for $T$, then it has a $SOP$-guarantee in some model of $T$.

\noindent (ii) $\phi$ has $NSOP$ for the theory $T$ if and only if it rejects a $SOP$-guarantee in every model $M$ of $T$, and equivalently, it rejects  a $SOP$-guarantee in some model $M$ of $T$ in which all types over the empty set in countably many variables are realized.

\noindent (iii) If $\phi(x,y)$ has a $SOP$-guarantee in some model $M$ of $T$, then there are arbitrarily long $SOP$-guarantees for $\phi$ (of course, in different models).
\end{Remark}

\section{Independence property} In this section we give some
new characterizations  of $NIP$ which are different from the
classical case.

Recall that the definition of $NIP$ is standard in continuous logic and is mentioned in \cite{Ben-Vapnik} and \cite{Ben2}. It is easy to verify that the following definition is equivalent to the standard one.
\begin{Definition} {\em Let $T$ be a complete (continuous) theory and $\phi(x,y)$ a formula.

\medskip\noindent
(i) We say  $\phi(x,y)$  has the {\em  independence property}
($IP$) if  there are real numbers $r<s$ and a sequence
$(a_i:i<\omega)$ in the monster model $\mathcal U$ such that for
all disjoint finite sets $E,F$ the following holds
$$\exists y\big( \bigwedge_{i\in E}\phi(a_i,y)\leqslant r~\wedge~\bigwedge_{i\in F}\phi(a_i,y)\geqslant s\big)$$

\medskip\noindent
(ii) We say  $\phi(x,y)$  has the non independence property
($NIP$) if it has not $IP$.

\medskip\noindent
(iii) We say  $T$ has $NIP$ ($IP$) if every (some) formula has
$NIP$ ($IP$), respectively.}
\end{Definition}

The following definition is exactly equivalent to $NIP$, but we state it separately for emphasis and due to its specific utility.
\begin{Definition} \label{NIP-remark} Let $T$ be a complete (continuous) theory,
$\phi(x,y)$ a formula. We say the {\em independence property is
semi-uniformly blocked for $\phi(x,y)$}  if

for each $r<s$ there is a natural number $N_{r,s}$ and a set
$E\subset\{1,\ldots,N_{r,s}\}$ such that  for every sequence
$(a_i)$ (in the monster model of $T$) and  each $i_1<\cdots<i_{N_{r,s}}<\omega$, the following does not hold
$$\exists y\big(\bigwedge_{j\in E}\phi(a_{i_j},y)\leq r~\wedge~\bigwedge_{j\in N\setminus E}\phi(a_{i_j},y)\geq s\big).$$
\end{Definition}

\begin{Remark}
It is easy to see that $\phi(x,y)$ has $NIP$ if and only if the independence
property is semi-uniformly blocked for $\phi(x,y)$ (see also
Theorem~\ref{NIP=Baire-1/2} below). Note that, as before mentioned
in Remark~\ref{main-remark}, the condition (ii)(1) of
Theorem~\ref{SOP-main} is stronger than $NIP$ in the sense that
there is a natural number $N$ such that for all $r<s$,
$N_{r,s}=N$.
\end{Remark}
In the following, we present a stronger argument and compare some of the above concepts with each other.
\begin{Fact}
Let $T$ be a complete $L$-theory, $\phi(x,y)$ an $L$-formula. Suppose condition (i) of  Lemma~\ref{bounded variation-continuous}  holds, that is, there is a number $C$ such that $\sum_1^\infty|\phi(a_i,b)-\phi(a_{i+1},b)|\leq C$ for all $(a_i),b$ in the monster model. Then $\phi(x,y)$ has $NIP$.
\end{Fact}
\begin{proof} If not, there are $r<s$ such that for any arbitrarily large natural number $N$ there are parameters $a_1,\ldots,a_N,b$ such that $\phi(a_i,b_N)\leq r$ if $i$ is even and $\phi(a_i,b_N)\geq s$ otherwise. Therefore, $\sum_1^N|\phi(a_i,b_N)-\phi(a_{i+1},b_N)|\geq N-1$.
Since $N$ can grow arbitrarily large, by the compactness theorem,
there are $(a_i),b$ such that $\sum_1^\infty|\phi(a_i,b)-\phi(a_{i+1},b)|=\infty$, a contradiction.
\end{proof}
To summarize uniform $IP$ blocking imples the condition (i) in Lemma~\ref{bounded variation-continuous}   and the latter condition implies $NIP$(= semi-uniform blocking).
Using Ramsey's theorem, it is not difficult to show that uniform $IP$ blocking is {\em strictly} stronger than condition (i) in Lemma~\ref{bounded variation-continuous}.  To do this, it suffices to decrease $|r-s|$ and increase the length of the sequence in such a way that the resulting sum remains small. Now, since the length of the sequence can be increased arbitrarily, by the finite Ramsey theorem, one can find a (sufficiently large) subsequence that is shattered (that is, there is no $N$ that satisfies the definition of uniform $IP$ blocking). See also the direction (i)~$\Rightarrow
$~(ii) of Lemma~2.8 in \cite{K-Baire}.\footnote{For this purpose, one should switch the roles of $r$ and $s$ with those of 0 and 1 in that context, and assume that $r$ and $s$ are sufficiently close to each other.}

\subsection{Characterization of $NIP$}  
We first present a characterization of $NIP$ in terms of function spaces. Notably, the conditions (iv) and (v) appear to be new to model theorists.

\begin{Theorem}[Characterization of $NIP$] \label{NIP=Baire-1/2} 
Let $T$ be a complete $L$-theory, $\phi(x,y)$ an $L$-formula, and $\mathcal{U}$ the monster model of $T$. Then the following are equivalent:

\begin{enumerate}
    \item[(i)] $\phi(x,y)$ has $NIP$ for $T$.
    
    \item[(ii)] The independence property is semi-uniformly blocked for $\phi(x,y)$.
    
    \item[(iii)] For any sequence (not necessarily indiscernible) $(a_i : i < \omega)$, there exists a subsequence $(a_{j_i} : i < \omega)$ such that, for each $b \in \mathcal{U}$, the sequence $\phi(a_{j_i}, b)$ converges.
    
    \item[(iv)] For any sequence (not necessarily indiscernible) $(a_i : i < \omega)$, there exists a subsequence $(a_{j_i} : i < \omega)$ such that the sequence $\phi(a_{j_i}, y)$ converges to a function $f$ that is Baire-1/2.
    
    \item[(v)] There is no sequence $(a_i)$ such that the sequence $(\phi(a_i, y) : S_{\tilde{\phi}}(\mathcal{U}) \to [0,1])$ is equivalent in the supremum norm to the usual basis of $\ell_1$.
\end{enumerate}
\end{Theorem}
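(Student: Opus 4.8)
The plan is to prove the five conditions equivalent by running the cycle (i)~$\Leftrightarrow$~(ii), (ii)~$\Rightarrow$~(iv)~$\Rightarrow$~(iii)~$\Rightarrow$~(i), and then closing with the two-way equivalence (i)~$\Leftrightarrow$~(v). Throughout I would work in the function-space picture: writing $X=S_{\tilde\phi}(\mathcal U)$ and $f_i=\phi(a_i,y)\in C(X)$, I note that the types realized by elements $b\in\mathcal U$ form a dense subset of $X$, and that every independence pattern is witnessed at such a point (the witness $y$ may be taken in $\mathcal U$). Conceptually, conditions (ii), (iii)/(iv) and (v) are three faces of the single assertion that \emph{every sequence $(f_i)$ has a pointwise-convergent subsequence}, seen respectively through the Bourgain--Fremlin--Talagrand criterion (Fact~\ref{BFT criterion}), the angelicity of $B_1(X)$, and Rosenthal's $\ell_1$-theorem; condition (iv) adds the Baire-1/2 refinement special to the continuous setting. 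For (i)~$\Leftrightarrow$~(ii) I would argue model-theoretically: (ii)~$\Rightarrow$~(i) is immediate, since a single blocked pattern of length $N_{r,s}$ contradicts the full $(r,s)$-independence required by $IP$; for (i)~$\Rightarrow$~(ii) I would fix $r<s$, pass via Fact~\ref{Ramsey} to a $\tilde\phi$-indiscernible sequence, observe that $NIP$ forces finite alternation across the band $[r,s]$, use Ramsey to pin down one set $E\subseteq\{1,\dots,N_{r,s}\}$ whose pattern fails on every increasing tuple, and transfer back by compactness. Crucially $N_{r,s}$ is allowed to depend on the band, which is exactly the gap between $NIP$ and condition (ii)(1) of Theorem~\ref{SOP-main}.

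The heart of the argument is (ii)~$\Rightarrow$~(iv). Given $(a_i)$, condition (ii) says precisely that the countable norm-bounded set $A=\{f_i:i<\omega\}\subseteq C(X)$ satisfies clause~(3) of Fact~\ref{BFT criterion} for every pair $r<s$ and every subsequence; hence $A$ is relatively compact in $B_1(X)$, and since $B_1(X)$ is angelic and $X$ is Polish, every subsequence has a further subsequence converging pointwise on $X$ to some $f\in B_1(X)$. This gives the convergence in (iv); the remaining and main task is to upgrade the limit from Baire-1 to Baire-1/2. For this I would discretize as in Lemma~\ref{bounded variation-continuous}, setting $f_i^n=\frac1n[nf_i]$: each $f_i^n$ is semicontinuous, hence $DBSC$, and the band-wise blocking numbers $N_{k/n,(k+1)/n}$ bound, level by level, the total variation $\sum_i|f_i^n-f_{i+1}^n|$ by a finite constant $C_n$. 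The pointwise limit $g_n=\lim_i f_i^n=\frac1n[nf]$ should then be shown to lie in $DBSC$ (using Fact~\ref{properties}(i),(ii),(iv)), and since $|g_n-f|\le 1/n$ gives $g_n\to f$ uniformly, we conclude $f\in B_{1/2}(X)$ by Definition~\ref{subclasses}(iv).

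The remaining implications are lighter. The step (iv)~$\Rightarrow$~(iii) is immediate, since pointwise convergence of $\phi(a_{j_i},y)$ on all of $X$ yields convergence at each realized point $b\in\mathcal U$. For (iii)~$\Rightarrow$~(i) I would argue by contraposition: if $\phi$ has $IP$, witnessed by $(a_i)$ and $r<s$, then for any subsequence the type asserting $\phi(a_{j_i},y)\le r$ for even $i$ and $\phi(a_{j_i},y)\ge s$ for odd $i$ is finitely satisfiable, hence realized by some $b^\ast\in\mathcal U$, and $\phi(a_{j_i},b^\ast)$ oscillates between $[0,r]$ and $[s,1]$ and so diverges, contradicting (iii). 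Finally, for (i)~$\Leftrightarrow$~(v): if $\phi$ has $NIP$ then each such $A$ is relatively compact in $B_1(X)$ and has a convergent subsequence, so no sequence can be equivalent to the $\ell_1$-basis (which has none); conversely, if $\phi$ has $IP$, the corresponding $(r,s)$-independent sequence $(f_i)$ is, by Rosenthal's $\ell_1$-theorem \cite{Ros} in its combinatorial form, equivalent after passing to a subsequence to the unit vector basis of $\ell_1$ in the supremum norm, so (v) fails.

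I expect the main obstacle to be the Baire-1/2 upgrade inside (ii)~$\Rightarrow$~(iv). The delicate point is not the existence of the convergent subsequence, which is standard via Fact~\ref{BFT criterion}, but the passage from the \emph{band-dependent} numbers $N_{r,s}$ to the conclusion $f\in B_{1/2}$: one must extract a genuinely finite variation bound for each discretization $f_i^n$, verify that the simple limits $g_n$ are actually $DBSC$ rather than merely Baire-1 (the $f_i^n$ are semicontinuous but not continuous, so Fact~\ref{properties}(i) applies only after a further continuous approximation, or via the Banach-algebra structure applied to the level indicators $\chi_{[f\ge k/n]}$), and then control the uniform approximation so that the limit lands in $B_{1/2}$ and not in the strictly larger class $B_1$. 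This is exactly the step where continuous logic departs from the classical treatment of \cite{K-Baire}, in which a single uniform $N$ delivers a $DBSC$ limit outright.
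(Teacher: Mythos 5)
Your overall architecture and the light implications ((i)~$\Leftrightarrow$~(ii) by compactness, (iv)~$\Rightarrow$~(iii), (iii)~$\Rightarrow$~(i) by realizing an oscillating type in the monster, and (i)~$\Leftrightarrow$~(v) via Rosenthal's $\ell_1$-theorem) agree with what the paper does, which disposes of these by citing folklore and \cite{Ros}. The genuine gap is exactly where you suspected it, in the Baire-1/2 upgrade, and the fix you propose does not work. The claim that the band-wise numbers $N_{r,s}$ bound the variation $\sum_i|f_i^n(x)-f_{i+1}^n(x)|$ of the discretization $f_i^n=\frac{1}{n}[nf_i]$ is false: a transition of $f_i^n$ between adjacent cells records a crossing of a single threshold $k/n$, i.e.\ of bands $(k/n-\epsilon,\ k/n)$ of arbitrarily small width, and under semi-uniform blocking $N_{r,s}$ is allowed to blow up as $s-r\to 0$. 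Concretely, if at some point $x$ one has $f_i(x)=\frac12-\frac1i$ for even $i$ and $f_i(x)=\frac12$ for odd $i$, then every band $r<s$ of positive width is alternated across only finitely often (so this pointwise behaviour is fully compatible with condition (ii), indeed with stability), yet $\sum_i|f_i^2(x)-f_{i+1}^2(x)|=\infty$. The discretization trick of Lemma~\ref{bounded variation-continuous} succeeds only because there the blocking is \emph{uniform}: a single $N$ valid for all bands lets one shrink the band after the finitely many alternation indices have been fixed. (A secondary defect: $\lim_i f_i^n$ need not equal $\frac{1}{n}[nf]$, since the floor does not commute with pointwise limits at the thresholds, so even granted a variation bound your identification of $g_n$ is wrong.)

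The paper closes this step by a band-by-band argument that never needs a global variation bound. By the equivalence (1)~$\Leftrightarrow$~(5) of Proposition~2.3 of \cite{HOR}, $f\notin B_{1/2}(X)$ exactly when the ordinal oscillation index $\alpha(f;r,s)$ is infinite for some single pair $r<s$, equivalently $K_m(f;r,s)\neq\emptyset$ for all $m$; Lemma~3.1 of \cite{HOR} then converts this, for each $m$, into a subsequence of $(\phi(a_{j_i},y))$ realizing the full independence pattern on all disjoint $E,F\subseteq\{1,\dots,m\}$ for some $r<r'<s'<s$, and compactness yields $IP$, contradicting (i). If you want to keep your decomposition, replace the discretization by this index argument (or prove directly that finiteness of $\alpha(f;r,s)$ for every band forces membership in $B_{1/2}$); the remainder of your proposal then goes through essentially as in the paper.
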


\begin{proof}
The implications (ii)~$\Rightarrow$~(i) and (iv)~$\Rightarrow$~(iii) are evident.

\medskip
(i)~$\Rightarrow$~(ii): This follows directly from the compactness theorem.

\medskip
The equivalence (i)~$\Leftrightarrow$~(iii) is well-known and considered folklore. (See the equivalence (i)~$\Leftrightarrow$~(v) of Lemma~3.12 in \cite{K3}.)

\medskip
The equivalence (iii)~$\Leftrightarrow$~(v) is a consequence of one of the most elegant results in Banach space theory, Rosenthal's $\ell_1$-theorem \cite[Theorem~1]{Ros}.

\medskip
(i)~$\Rightarrow$~(iv): Since (i) and (iii) are equivalent, for any sequence $(a_i:i<\omega)$ there exists a subsequence $(a_{j_i} : i < \omega)$ such that the sequence $\phi(a_{j_i}, y)$ converges to a function $f$. It is sufficient to show that $f$ is Baire-1/2.

Now, suppose for a  contradiction that $f$ is not Baire-1/2.
We shall make use of Proposition 2.3 from \cite{HOR}.
For this purpose, we use the ordinal indices $K_m(f;r,s)$ and $\alpha(f;r,s)$, which are defined on page 7  of that paper.\footnote{For a function $F$ on a compact metric space $K$ and real numbers $a<b$, the index   $K_\alpha(F;a,b)$ in \cite{HOR} is essentially the Bourgain rank, whose idea also appears in \cite{BFT}, although the recent paper does not mention the rank (index)  explicitly. 
To keep the proof concise and avoid complexity, we do not provide the definition of these indices and instead refer to \cite[page 7]{HOR}.}

 By the equivalence (1)~$\Leftrightarrow$~(5) in Proposition~2.3 of \cite{HOR}, there exist $r < s$ such that the ordinal index $\alpha(f; r, s)$ is infinite. Equivalently, for all natural numbers $m$, we have $K_m(f; r, s) \neq \emptyset$. 

By Lemma~3.1 in \cite{HOR}, for every $m$, and for $r < r' < s' < s$, there exists a subsequence $(\phi(a_{j_i}, y) : i < \omega)$ such that the following holds:
\[
\exists y \bigg( \bigwedge_{i \in E} \phi(a_i, y) \leq r' ~\wedge~ \bigwedge_{i \in F} \phi(a_i, y) \geq s' \bigg),
\]
for all disjoint subsets $E, F$ of $\{1, \ldots, m\}$.

By the compactness theorem, this implies that $\phi$ has the independence property ($IP$), which contradicts the assumption that $\phi$ has $NIP$. (Note that $r'$ and $s'$ are fixed, while $m$ can grow arbitrarily large. Therefore, the compactness theorem applies here.)
\end{proof}

\begin{Remark} 
In general, it seems one cannot expect the limits in Theorem~\ref{NIP=Baire-1/2}(iv) to belong to a proper subclass of Baire-1/2. Indeed, using Theorem~6.1 of \cite{HOR}, ine might be able to demonstrate that there exist a theory $T$, an $NIP$ formula $\phi(x,y)$, and a sequence $(a_i)$ as described, such that the limit is not Baire-1/4. 

\noindent (Recall that $DBSC \subsetneqq \text{Baire-1/4} \subsetneqq \text{Baire-1/2}$.)
\end{Remark}

\noindent For the sake of completeness, we state the result separately for classical ($\{0,1\}$-valued) logic. The following was proved in \cite[Proposition~2.14]{K-Baire}, although the condition (v) is new.

\begin{Corollary}[Classical case] \label{NIP=DBSC}
Let $T$ be a complete classical $L$-theory, $\phi(x,y)$ an $L$-formula, and $\mathcal{U}$ the monster model of $T$. Then the following are equivalent:

\begin{enumerate}
    \item[(i)] $\phi(x,y)$ has $NIP$ for $T$.
    
    \item[(ii)] For any sequence (not necessarily indiscernible) $(a_i : i < \omega)$, the independence property is uniformly blocked for $\phi(x,y)$ on $(a_i : i < \omega)$. (See Definition~\ref{uniformly blocked}.)
    
    \item[(iii)] For any sequence (not necessarily indiscernible) $(a_i : i < \omega)$, there exists a subsequence $(a_{j_i} : i < \omega)$ such that, for each $b \in \mathcal{U}$, the sequence $\phi(a_{j_i}, b)$ converges.
    
    \item[(iv)] For any sequence (not necessarily indiscernible) $(a_i : i < \omega)$, there exists a subsequence $(a_{j_i} : i < \omega)$ such that the sequence $\phi(a_{j_i}, y)$ converges to a function $f$ that belongs to $DBSC$.
    
    \item[(v)] There is no sequence $(a_i)$ such that the sequence $(\phi(a_i, y) : S_{\tilde{\phi}}(\mathcal{U}) \to [0,1])$ is equivalent in the supremum norm to the usual basis of $\ell_1$.
\end{enumerate}
\end{Corollary}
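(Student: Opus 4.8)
The plan is to obtain the corollary by specializing Theorem~\ref{NIP=Baire-1/2} to $\{0,1\}$-valued logic, where two features simplify the statement. Since classical logic is the special case of continuous logic in which every instance takes values in $\{0,1\}$, Theorem~\ref{NIP=Baire-1/2} applies verbatim and already yields (i)~$\Leftrightarrow$~(iii)~$\Leftrightarrow$~(v): the equivalence (i)~$\Leftrightarrow$~(iii) is the folklore characterization of $NIP$ by convergent subsequences of evaluations (Lemma~3.12 of \cite{K3}), and (iii)~$\Leftrightarrow$~(v) is Rosenthal's $\ell_1$-theorem \cite{Ros}. Neither argument uses the value structure of $\phi$, so both transfer without change; the implication (iv)~$\Rightarrow$~(iii) is immediate, as pointwise convergence to a $DBSC$ function is in particular pointwise convergence at each $b\in\mathcal U$.

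The first genuinely classical point is that condition (ii) here is \emph{uniform} rather than \emph{semi-uniform} blocking. I would observe that when every $\phi(a,b)$ lies in $\{0,1\}$, for any thresholds with $r\in[0,1)$ and $s\in(0,1]$ the clauses ``$\phi\leq r$'' and ``$\phi\geq s$'' read as ``$\phi=0$'' and ``$\phi=1$'', so all admissible pairs $(r,s)$ give literally the same condition. Hence the number $N_{r,s}$ furnished by semi-uniform blocking may be chosen independent of $(r,s)$, and $NIP$ (via Theorem~\ref{NIP=Baire-1/2}, (i)~$\Rightarrow$~(ii), which is compactness) yields a single $N$ that uniformly blocks the independence property on \emph{every} sequence; restricting to a given $(a_i)$ this is precisely (ii). The converse (ii)~$\Rightarrow$~(i) is the easy direction: if $\phi$ had $IP$ there would be a shattered sequence on which no $N$ can block the pattern, contradicting uniform blocking.

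The second classical point is the improvement of the limit in (iv) from Baire-1/2 to $DBSC$. Here I would combine (ii) with Lemma~\ref{bounded variation-continuous}: uniform blocking of the independence property on $(a_i)$ is exactly the hypothesis of that lemma, so the convergent subsequence $(\phi(a_{j_i},y))$ supplied by (iii) has pointwise limit $f$ with $\sum_i|f_i(x)-f_{i+1}(x)|\leq C$, whence $f\in DBSC$ by Lemma~\ref{bounded variation-continuous}(ii) (note that uniform blocking passes to subsequences). Equivalently, reusing Theorem~\ref{NIP=Baire-1/2}(iv) directly, the pointwise limit of $\{0,1\}$-valued functions is again $\{0,1\}$-valued, since a $\{0,1\}$-valued sequence converges only if eventually constant; thus $f$ is \emph{simple}, and by Fact~\ref{properties}(iv) a simple $B_{1/2}$ function is automatically $DBSC$. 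The step I expect to require the most care is the collapse from semi-uniform to uniform blocking in (ii)—verifying that over $\{0,1\}$ a single $N$ works uniformly in $(r,s)$—while everything else is either a direct citation of Theorem~\ref{NIP=Baire-1/2} and its ingredients or a routine application of Fact~\ref{properties}(iv). Once (i)~$\Leftrightarrow$~(ii) is secured, the chain (i)~$\Rightarrow$~(iv)~$\Rightarrow$~(iii)~$\Leftrightarrow$~(i) together with (iii)~$\Leftrightarrow$~(v) closes all equivalences.
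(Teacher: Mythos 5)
Your proposal is correct and follows essentially the same route as the paper, whose proof is simply to specialize Theorem~\ref{NIP=Baire-1/2} to the $\{0,1\}$-valued case and invoke Fact~\ref{properties}(iv) (simple Baire-1/2 functions are $DBSC$) to upgrade the limit in (iv). You spell out two details the paper leaves implicit --- the collapse of semi-uniform to uniform blocking because all admissible thresholds $(r,s)$ encode the same condition over $\{0,1\}$, and the fact that the pointwise limit of $\{0,1\}$-valued functions is simple --- both of which are accurate and worth recording.
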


\begin{proof}
The result follows immediately from Theorem~\ref{NIP=Baire-1/2}, as for simple functions, we have $DBSC = \text{Baire-1/2}$ (Fact~\ref{properties}(iv)).
\end{proof}

\subsection{Baire-1/2 Definability of Coheirs} 

In \cite{K3}, the notion of ``$NIP$ in a model" (within the framework of continuous logic), as defined in Definition~\ref{NIP in model} below, was introduced. Applications of this concept, such as the Baire-1 definability of coheirs over $NIP$ models, were also presented. Subsequently, \cite{KP} extended the applications of this notion to the framework of classical logic. 

In this section, we establish a stronger result under a stronger assumption. Specifically, we replace ``$NIP$ in a model" with ``$NIP$ for a theory." Our goal is to demonstrate that coheirs in $NIP$ theories are Baire-1/2 definable (cf. Definition~\ref{Baire-1/2 definable} below).

 Let $M$ be a separable model, $M^*$ a saturated elementary extension of $M$, $X = S_{\tilde{\phi}}(M)$, and 
$A = \{\phi(a, y) : X \to [0, 1] \mid a \in M\}$. A type $p(x) \in S_\phi(M^*)$ is called a {\em coheir (of a type) over $M$} if it is {\em approximately finitely satisfiable in $M$}; that is, for every condition $\varphi = 0$ in $p(x)$ (i.e., $\varphi(p) = 0$) and every $\epsilon > 0$, the condition $|\varphi| \leq \epsilon$ is satisfiable in $M$. 

Clearly, in the classical case (i.e., when $\epsilon = 0$ and $(\varphi = 0) \in p$ means $\varphi \in p$), this notion coincides with the usual notion of a coheir. Alternatively, using the notation introduced in Section~2.2, $p(x) \in S_\phi(M^*)$ can be regarded as a function from $M^*$ to $[0, 1]$, defined by $b \mapsto \phi(p, b)$. Then, $p$ is a coheir of a type over $M$ if, for each $\epsilon > 0$ and each $b_1, \ldots, b_n \in M^*$, there exists some $a \in M$ such that $|\phi(p, b_i) - \phi(a, b_i)| < \epsilon$ for all $i \leq n$.

Let $p(x)$ be a coheir of a type over $M$, and let $b_1, b_2 \in M^*$. If $b_1$ and $b_2$ have the same $\tilde{\phi}$-type over $M$, then $\phi(p, b_1) = \phi(p, b_2)$. Indeed, since $p(x)$ is approximately finitely satisfiable in $M$, for any $\epsilon > 0$, there exists some $a \in M$ such that $|\phi(p, b_i) - \phi(a, b_i)| < \epsilon$ for $i = 1, 2$. Therefore,
\[
|\phi(p, b_1) - \phi(p, b_2)| \leq |\phi(p, b_1) - \phi(a, b_1)| + |\phi(a, b_1) - \phi(a, b_2)| + |\phi(p, b_2) - \phi(a, b_2)| < 2\epsilon.
\]
Since $\epsilon$ is arbitrary, it follows that $\phi(p, b_1) = \phi(p, b_2)$.

For simplicity, for all $b \in M^*$, we will write $\phi(p, b) = \phi(p, q)$, where $q = \text{tp}_{\tilde{\phi}}(b/M) \in X$. To summarize, $p(x)$ is a function in $[0, 1]^X$, defined by $q \mapsto \phi(p, q)$ as described above.

  By Remark~3.19 of \cite{K3}, similar to the classical logic, there is a correspondence between the coheirs $p(x)$ of types over $M$ and the functions in the pointwise closure of $A$ (see also the explanation before Definition~3.18 of \cite{K3}). For the sake of completeness, we present it and provide a proof.

\begin{Fact}\footnote{This fact was first stated by Pillay in \cite{Pillay-Grothendieck} for classical logic.}\label{Pillay remark}
Let $X$ be as above and $f \in [0, 1]^X$. Then $f$ belongs to the pointwise closure of $A$ if and only if there is a $p(x) \in S_{\phi}(M^*)$ such that $p(x)$ is approximately finitely satisfiable in $M$ and $f(q) = \phi(p, q)$ for all $q \in X$.
\end{Fact}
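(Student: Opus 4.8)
The plan is to prove the two implications separately; the direction producing the coheir is the substantive one. Throughout I use the identification from Section~2.2: for $a\in M$ the function $\phi(a,\cdot)$ on $X=S_{\tilde\phi}(M)$ sends $q$ to $\phi(a,q)$, and for any realization $b\in M^*$ of $q$ one has $\phi(a,q)=\phi(a,b)$, this value depending only on $q$ precisely because $a\in M$ (so that $q=tp_{\tilde\phi}(b/M)$ already records the value against $a$). For a coheir $p$ the quantity $\phi(p,q)$ is defined as $\phi(p,b)$, which is well defined by the remark preceding the Fact.

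For the easy direction, suppose $p\in S_\phi(M^*)$ is approximately finitely satisfiable in $M$ with $f(q)=\phi(p,q)$ for all $q\in X$, and fix a basic pointwise neighborhood of $f$ given by $q_1,\dots,q_n\in X$ and $\epsilon>0$. Realizing each $q_i$ by some $b_i\in M^*$, approximate finite satisfiability yields $a\in M$ with $|\phi(a,b_i)-\phi(p,b_i)|<\epsilon$ for $i\le n$; since $a\in M$ we have $\phi(a,q_i)=\phi(a,b_i)$, so $\phi(a,\cdot)$ lies in the chosen neighborhood. Hence every basic neighborhood of $f$ meets $A$, i.e. $f$ is in the pointwise closure of $A$.

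For the converse, suppose $f$ lies in the pointwise closure of $A$. Because $[0,1]^X$ need not be first countable, I work with nets: choose a net $(a_\alpha)$ in $M$ with $\phi(a_\alpha,q)\to f(q)$ for every $q\in X$. Since $S_\phi(M^*)$ is compact, pass to a subnet along which $tp_\phi(a_\alpha/M^*)$ converges to some $p\in S_\phi(M^*)$; a subnet of a convergent net has the same limit, so we still have $\phi(a_\alpha,q)\to f(q)$ for all $q\in X$, while in addition $\phi(a_\alpha,b)\to\phi(p,b)$ for every $b\in M^*$. I first check that $p$ is approximately finitely satisfiable in $M$: given $b_1,\dots,b_n\in M^*$ and $\epsilon>0$, convergence at these finitely many points provides an index $\alpha$ with $|\phi(a_\alpha,b_i)-\phi(p,b_i)|<\epsilon$ for all $i$, and $a_\alpha\in M$ is the required witness. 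Consequently $\phi(p,\cdot)$ is well defined on $X$.

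It remains to show $f(q)=\phi(p,q)$ for each $q\in X$, and this is the crux. Fix $q$ and a realization $b\in M^*$ of it. Since every $a_\alpha$ lies in $M$, the value $\phi(a_\alpha,q)$ equals $\phi(a_\alpha,b)$; passing to the limit along the net gives $f(q)=\lim_\alpha\phi(a_\alpha,q)=\lim_\alpha\phi(a_\alpha,b)=\phi(p,b)=\phi(p,q)$, the last equality being the definition of $\phi(p,q)$ via the coheir $p$. The main obstacle is exactly this chain of equalities: it works only because the identity $\phi(a_\alpha,q)=\phi(a_\alpha,b)$ needs $a_\alpha\in M$, and because the final step needs $\phi(p,\cdot)$ to factor through $X$, which is precisely the approximate finite satisfiability established just above. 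Everything else is bookkeeping with nets and the compactness of $S_\phi(M^*)$.
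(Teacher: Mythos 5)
Your proof is correct and follows essentially the same unwinding of definitions as the paper's own argument: a basic pointwise neighbourhood of $f$ determined by $q_1,\dots,q_n$ and $\epsilon$ meets $A$ exactly when some $a\in M$ approximates the values $f(q_i)$, which is precisely the approximate finite satisfiability of the associated type. The one place you go beyond the paper is the explicit construction of $p$ as a subnet limit of the types $tp_\phi(a_\alpha/M^*)$ in the compact space $S_\phi(M^*)$, together with the check that $\phi(a_\alpha,q)=\phi(a_\alpha,b)$ for $a_\alpha\in M$ lets the limit factor through $X$; the paper's chain of equivalences simply asserts the existence of such a $p$, so this compactness step is a welcome filling of a gap rather than a different route.
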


\begin{proof}
Recall that a typical neighborhood of $f$ is of the form 
\[
U_f(q_1, \ldots, q_n; \epsilon) = \{g \in [0, 1]^X : |f(q_i) - g(q_i)| < \epsilon \text{ for } i \leq n \},
\]
where $\epsilon > 0$ and $\{q_1, \ldots, q_n\}$ is a finite subset of $X$. Thus, $f$ belongs to the pointwise closure of $A$ if and only if for each neighborhood $U_f(q_1, \ldots, q_n; \epsilon)$, there exists $\phi(a, y) \in A$ (i.e., $a \in M$) such that $|f(q_i) - \phi(a, q_i)| < \epsilon$ for all $i \leq n$. 

This is equivalent to saying that there exists $p(x) \in S_{\phi}(M^*)$ that is approximately finitely satisfiable in $M$, where $\phi(p, q) = f(q)$ for all $q \in X$. In this case, recall that $\phi(x, q) = r$ belongs to $p(x)$ (or $\phi(p, q) = r$) if and only if $\phi(p, b) = r$ for all $b \in M^*$ such that $q = \text{tp}_{\tilde{\phi}}(b / M) \in X$.
\end{proof}

\begin{Definition}[Baire-1/2 definability] \label{Baire-1/2 definable}
  {\em Let $M$ be a separable model, $M^*$ a saturated elementary extension of it, and $\phi(x,y)$ a formula. Let $p(x) \in S_\phi(M^*)$ be approximately finitely satisfiable in $M$. We say that $p$ is {\em Baire-1/2 (resp. $DBSC$) definable over $M$} if there is a sequence $\phi(a_n,y)$, with $a_n \in M$, such that the sequence $(\phi(a_n,y))$ of functions on $S_{\tilde{\phi}}(M)$ converges pointwise to a function $f$ which is Baire-1/2 (resp. $DBSC$) and $\phi(p,q) = f(q)$ for all $q \in S_{\tilde{\phi}}(M)$.}
\end{Definition}

\begin{Remark}
(i) Recall that in the above definition, as $p(x)$ is approximately finitely satisfiable in $M$, if $b_1, b_2 \in M^*$ have the same $\tilde\phi$-type over $M$, then $\phi(p,b_1) = f(tp(b/M)) = \phi(p,b_2)$.

(ii) Note that, as $f$ is a Baire-1 function, for every open set $O \subseteq \mathbb{R}$, the set $f^{-1}(O)$ is $F_\sigma$, i.e., it is a countable union of closed sets. Of course, Baire-1/2 definability says more because Baire-1/2 $\subsetneq$ Baire-1. Indeed, in classical logic, $f^{-1}(O)$ is a disjoint difference of closed sets $W_1, \ldots, W_m$. \footnote{Note that such functions are Baire-1 but not conversely. See \cite[Proposition~2.2]{CMR} and \cite[Remark~2.15]{K-Baire}.}

(iii) Note that Baire-1/2 (or $DBSC$) definability is a generalization of the usual definability. That is, a type is called definable if the function $f$ above is {\em continuous}. Pillay \cite{Pillay-Grothendieck} showed that ``stability in a model" is equivalent to the definability of ``coheirs".\footnote{Unfortunately, some authors have mistakenly claimed that ``stability in a model" is equivalent to the definability of ``types", and thus made other false claims. The reason for this is probably that they were not aware of Fact~\ref{Pillay remark} above.} In \cite{KP}, this result was generalized to ``$NIP$ in a model". In \cite{K-Baire}, the connection between $NIP$ for classical theories and $DBSC$ functions was studied, although the equivalence of $NIP$ (for theories) and $DBSC$ definability of coheirs was not studied. We do this work in the present paper, moreover for continuous logic.
\end{Remark}

 Because we will refer to a proof for the notion `$NIP$ in a model' (\cite[Definition~3.11]{K3}), we recall this notion:

\begin{Definition}[$NIP$ in a model] \label{NIP in model}
  {\em Let $M$ be a model, and $\phi(x,y)$ a formula. We say that
  {\em $\phi(x,y)$ has $NIP$ in $M$} if for each sequence
  $(a_n)\subseteq M$, and $r > s$, there are some {\em finite} disjoint subsets $E, F \subseteq \mathbb{N}$ such that the following does not hold:
  $$\exists y \left( \bigwedge_{i \in E} \phi(a_i, y) \leq r \wedge \bigwedge_{i \in F} \phi(a_i, y) \geq s \right).$$}
\end{Definition}

In Lemma~3.12 of \cite{K3}, there is a long list of equivalences for this notion. The only additional thing we need to remember is that, due to the compactness theorem,  $NIP$ (for a theory) is stronger than $NIP$ in a model. We are now ready to give the definability result. The proof uses Theorem~\ref{NIP=Baire-1/2} and some crucial results from \cite{BFT}.

\begin{Theorem} \label{NIP definability}
  Let $T$ be a theory and $\phi(x,y)$ a formula. The following are equivalent:

  \noindent (i) $\phi(x,y)$ has $NIP$.

  \noindent (ii) For any separable model $M$ and saturated elementary extension $M^*$ of it, whenever $p(x) \in S_\phi(M^*)$ is approximately finitely satisfiable in $M$, it is Baire-1/2 definable over $M$.

  \noindent (iii) For any separable model $M$ and saturated elementary extension $M^*$ of it, the number of $p(x) \in S_\phi(M^*)$ which are approximately finitely satisfiable in $M$ is $< 2^{2^{\aleph_0}}$.
\end{Theorem}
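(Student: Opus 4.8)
The plan is to prove the cycle (i)$\Rightarrow$(ii)$\Rightarrow$(iii)$\Rightarrow$(i), organizing everything around the compact metric space $X=S_{\tilde\phi}(M)$ and the set $A=\{\phi(a,y):X\to[0,1]:a\in M\}\subseteq C(X)$. The first move is to record, via Fact~\ref{Pillay remark}, that the assignment $p\mapsto f_p$, where $f_p(q)=\phi(p,q)$, is a \emph{bijection} between the coheirs $p\in S_\phi(M^*)$ (types approximately finitely satisfiable in $M$) and the pointwise closure $\overline A$ of $A$ in $[0,1]^X$: surjectivity is exactly Fact~\ref{Pillay remark}, and injectivity follows from the observation made before Definition~\ref{Baire-1/2 definable} that $\phi(p,b)$ depends only on $tp_{\tilde\phi}(b/M)$, so $f_p$ recovers all of $p$. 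Thus ``the number of coheirs'' equals $|\overline A|$, and ``$p$ is Baire-1/2 definable'' translates to ``$f_p$ is a pointwise limit of a sequence from $A$ and is Baire-1/2''.

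For (i)$\Rightarrow$(ii): assuming $\phi$ has $NIP$, Theorem~\ref{NIP=Baire-1/2}(iii)--(iv) gives that every sequence in $A$ has a subsequence converging pointwise to a Baire-1/2 (in particular Baire-1) function, so $A$ is relatively countably compact in $B_1(X)$. Since $B_1(X)$ is angelic (Theorem~3F of \cite{BFT}, as used in the proof of Fact~\ref{BFT criterion}), this makes $\overline A\subseteq B_1(X)$ and forces every $f\in\overline A$ to be the pointwise limit of a sequence $(\phi(a_n,y))$ with $a_n\in M$. To see that such an $f$ is actually Baire-1/2, apply Theorem~\ref{NIP=Baire-1/2}(iv) to $(a_n)$: some subsequence yields a Baire-1/2 limit, which must equal $f$ because every subsequence of a pointwise convergent sequence shares its limit. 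Hence each $f_p$ is Baire-1/2 and witnessed by a sequence from $M$, i.e. every coheir is Baire-1/2 definable.

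The implication (ii)$\Rightarrow$(iii) is a counting argument: a Baire-1/2 definable coheir $p$ has $f_p=\lim_n\phi(a_n,y)$ for some sequence $(a_n)$ from $M$, and since $C(X)$ is separable there are at most $|A|^{\aleph_0}\leq(2^{\aleph_0})^{\aleph_0}=2^{\aleph_0}$ such sequences, each with at most one pointwise limit; as $p\mapsto f_p$ is injective, the number of (Baire-1/2 definable, hence by (ii) all) coheirs is at most $2^{\aleph_0}<2^{2^{\aleph_0}}$.

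For (iii)$\Rightarrow$(i) I argue contrapositively. If $\phi$ has $IP$, Theorem~\ref{NIP=Baire-1/2}(v) produces $(a_i)$ and $r<s$ realizing the independence configuration, and I build a copy of $\beta\mathbb N$ inside $\overline A$. For each ultrafilter $\mathcal U$ on $\mathbb N$ set $f_{\mathcal U}=\lim_{n\to\mathcal U}\phi(a_n,y)\in\overline A$. Given $\mathcal U_1\neq\mathcal U_2$, choose $S$ with $S\in\mathcal U_1$ and $\mathbb N\setminus S\in\mathcal U_2$; by independence the partial type $\{\phi(a_n,y)\leq r:n\in S\}\cup\{\phi(a_n,y)\geq s:n\notin S\}$ is finitely satisfiable, hence realized in the saturated $M^*$ by some $b$, giving a point $q=tp_{\tilde\phi}(b/M)\in X$ at which $f_{\mathcal U_1}(q)\leq r<s\leq f_{\mathcal U_2}(q)$. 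So $\mathcal U\mapsto f_{\mathcal U}$ is injective and $|\overline A|\geq 2^{2^{\aleph_0}}$, whence there are $2^{2^{\aleph_0}}$ coheirs, contradicting (iii). I expect this last step to be the main obstacle: verifying that distinct ultrafilters genuinely yield distinct functions requires combining the independence property with saturation of $M^*$ to realize \emph{infinite} partial types, and one must keep the correspondence $p\leftrightarrow f_p$ exact so that the $\beta\mathbb N$ many functions really count as distinct coheirs.
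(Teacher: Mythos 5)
Your proposal is correct and follows essentially the same route as the paper: the implication (i)$\Rightarrow$(ii) via Fact~\ref{Pillay remark}, the angelicity of $B_1(X)$ from \cite{BFT}, and Theorem~\ref{NIP=Baire-1/2}(iv); the counting of pointwise limits of sequences from $A$ for (ii)$\Rightarrow$(iii); and the $2^{2^{\aleph_0}}$ ultrafilter limits of an $IP$ witness sequence for (iii)$\Rightarrow$(i). The only differences are cosmetic (you close a single cycle where the paper proves (i)$\Leftrightarrow$(ii) directly, and your distinct-ultrafilters argument is in fact written more cleanly than the paper's, which has the roles of $r$ and $s$ garbled in the definition of $D(I)$).
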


\begin{proof} (i)~$\Leftrightarrow$~(ii): 
  As $\phi$ has $NIP$ (for the theory) iff for every separable model $M$ it has $NIP$ in $M$, the equivalence (i)~$\Leftrightarrow$~(ii) is a straightforward adaptation of the argument from Proposition~2.6 of \cite{K-definable} to continuous logic. Although there are some considerations. Indeed, recall that for a separable model $M$, the space $X = S_{\tilde{\phi}}(M)$ is compact and Polish. Let $M_0$ be a countable dense subset of $M$ and $A_0 = \{\phi(a, y) : X \to [0, 1] : a \in M_0\}$. Since $\phi(x, y)$ has a modulus of uniform continuity (cf. \cite[Theorem~3.5]{BBHU}) and the set $X_0 = \{q \in X : q = tp_{\tilde\phi}(b/M) \text{ for some } b \in M\}$ is dense in $X$, it is easy to check that $A_0$ and $A = \{\phi(a, y) : X \to [0, 1] : a \in M\}$ have the same pointwise closure. Thus, we can work with the countable set $A_0$, and the countability assumption of Fact~\ref{BFT criterion} is fulfilled throughout the proof. By Fact~\ref{Pillay remark}, any type $p(x) \in S_\phi(M^*)$ is a coheir iff there is a function $f$ in the closure of $A$ such that $f(q) = \phi(p, q)$ for all $q \in X$. By the equivalences (1)~$\Leftrightarrow$~(2)~$\Leftrightarrow$~(3) of Fact~\ref{BFT criterion}, $\phi$ has $NIP$ in $M$ iff the closure of $A$ in $[0, 1]^X$ is angelic, iff every $f$ in the closure of $A$ is Baire-1. The only additional point is that by Theorem~\ref{NIP=Baire-1/2}(iv), the function $f$ in the pointwise closure of $A$ that defines $p(x)$ is Baire-1/2.

  (ii)~$\Rightarrow$~(iii) is evident, since every Baire-1/2 function is the limit of a sequence of continuous functions.

  (iii)~$\Rightarrow$~(i) is the usual proof for many coheirs of formulas with the independence property. Namely, suppose that (i) fails; that is, there are $r < s$ and a sequence $(a_n)$ in a separable model $M$ such that:
  $$D(I) = \{ b \in S_{\tilde{\phi}}(M) : \bigwedge_{n \in I} \phi(a_n, b) \leq s \wedge \bigwedge_{n \in \mathbb{N} \setminus I} \phi(a_n, b) \geq r \} \neq \emptyset \quad \text{for every} \quad I \subseteq \mathbb{N}.$$
  If ${\cal F}$ and $\mathcal{G}$ are distinct ultrafilters on $\mathbb{N}$, there is an $I \subseteq \mathbb{N}$ such that $I \in \mathcal{F}$ and $\mathbb{N} \setminus I \in \mathcal{G}$, so that $\lim_{\mathcal{F}} \phi(a_n, b) \leq s < r \leq \lim_{\mathcal{G}} \phi(a_n, b)$ for every $b \in D(I)$. Since there are $2^{2^{\aleph_0}}$ distinct ultrafilters on $\mathbb{N}$, $(\phi(a_n, y) : S_{\tilde{\phi}}(M) \to [0, 1])$ has $2^{2^{\aleph_0}}$ distinct cluster points, and (iii) fails.
\end{proof}


We state the result separately for the classical ($\{0,1\}$-valued) logic.

\begin{Corollary}[Classical case] \label{NIP definability classic}
  Let $T$ be a classical theory and $\phi(x,y)$ a formula. The following are equivalent:

  \noindent (i) $\phi(x,y)$ has $NIP$.

  \noindent (ii) For any countable model $M$ and saturated elementary extension $M^*$ of it, whenever $p(x) \in S_\phi(M^*)$ is finitely satisfiable in $M$, then it is $DBSC$ definable over $M$.

  \noindent (iii) For any countable model $M$ and saturated elementary extension $M^*$ of it, the number of $p(x) \in S_\phi(M^*)$ which is finitely satisfiable in $M$ is $< 2^{2^{\aleph_0}}$.
\end{Corollary}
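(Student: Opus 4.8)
The plan is to derive this corollary directly from Theorem~\ref{NIP definability}, in exactly the way that Corollary~\ref{NIP=DBSC} is read off from Theorem~\ref{NIP=Baire-1/2}. The governing observation is that classical logic is the special case of continuous logic in which every formula $\phi(x,y)$ is $\{0,1\}$-valued; in that case the notion ``approximately finitely satisfiable in $M$'' collapses to the usual ``finitely satisfiable in $M$'' (take $\epsilon=0$, as recorded in the discussion preceding Definition~\ref{Baire-1/2 definable}), and a countable model is in particular separable. Hence Theorem~\ref{NIP definability} applies verbatim, and I only need to see why, in this setting, its Baire-1/2 conclusion sharpens to the $DBSC$ conclusion stated here.

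The key step is the behavior of the defining function. When $\phi$ is $\{0,1\}$-valued, each $\phi(a,y)\colon S_{\tilde\phi}(M)\to\{0,1\}$ is simple, and since $\{0,1\}$ is closed in $[0,1]$, the entire pointwise closure of $A=\{\phi(a,y):a\in M\}$ consists of simple ($\{0,1\}$-valued) functions. In particular, the function $f$ that defines a coheir $p(x)$ via $\phi(p,q)=f(q)$ is simple. Now Fact~\ref{properties}(iv) says that for a simple function $f$ on $X$ one has $f\in B_{1/2}(X)$ if and only if $f\in DBSC(X)$. Therefore the $DBSC$ definability appearing in condition (ii) of this corollary is, for $\{0,1\}$-valued $\phi$, equivalent to the Baire-1/2 definability in condition (ii) of Theorem~\ref{NIP definability}; this yields the equivalence (i)$\Leftrightarrow$(ii). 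Conditions (iii) of the corollary and of the theorem are literally the same statement once ``finitely satisfiable'' is identified with ``approximately finitely satisfiable'' in the classical case, so the implications (ii)$\Rightarrow$(iii) and (iii)$\Rightarrow$(i) transfer unchanged: (ii)$\Rightarrow$(iii) because each $DBSC$ function is a pointwise limit of a sequence drawn from the countable set $A_0$, bounding the number of coheirs by $2^{\aleph_0}<2^{2^{\aleph_0}}$, and (iii)$\Rightarrow$(i) is the standard ``many coheirs from $IP$'' counting argument with the $2^{2^{\aleph_0}}$ ultrafilters on $\Bbb N$ already given in the proof of Theorem~\ref{NIP definability}.

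There is essentially no genuine obstacle here beyond bookkeeping; the one point that must be checked, rather than waved through, is that the limit function genuinely remains $\{0,1\}$-valued (so that Fact~\ref{properties}(iv) is applicable) and that the classical notions of finite satisfiability and countability line up with their continuous counterparts. Accordingly, I would present the proof in a single line: \emph{immediate from Theorem~\ref{NIP definability} and Fact~\ref{properties}(iv), since in the classical case the functions defining coheirs are simple, whence Baire-1/2 $=DBSC$.}
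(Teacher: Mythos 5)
Your proof is correct and matches the paper's own argument, which likewise deduces the corollary immediately from Theorem~\ref{NIP definability} together with Fact~\ref{properties}(iv) (simple Baire-1/2 functions are $DBSC$). The extra details you supply — that the defining functions remain $\{0,1\}$-valued in the classical case and that approximate finite satisfiability collapses to finite satisfiability — are exactly the bookkeeping the paper leaves implicit.
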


\begin{proof}
  Immediate, by Theorem~\ref{NIP definability} and the fact that for simple functions, $DBSC$ = Baire-1/2 (cf. Fact~\ref{properties}(iv)).
\end{proof}

Of course, the direction (i)~$\Rightarrow$~(ii) can be considered as a consequence of Proposition~2.6 of \cite{HP}. Indeed, since assuming $NIP$, every coheir of a type over $M$ is {\em strongly Borel definable} in the sense of \cite{HP}, the function that defines the coheir is the indicator function of a finite Boolean combination of closed sets over $M$. It is easy to check that such a function is $DBSC$, by Proposition~2.2 of \cite{CMR}. (See also Remark~2.15 of \cite{K-Baire}.)

\begin{Remark}  
Let $T$ be a classical theory, $M$ a countable model of $T$, and  $\phi(x,y)$ a $NIP$ formula. It can be proved that every local Keisler measure which is finitely satisfiable in $M$ is Baire-1/2 definable. Indeed, Ben Yaacov and Keisler \cite[Corollary 2.10]{Ben-Keisler} proved that every measure in the theory $T$ corresponds to a type in the randomization $T^R$ of $T$. On the other hand, the randomization of every $NIP$ theory is a $NIP$ continuous theory (see \cite{Ben-Vapnik}). (In fact, the argument of this result is local (formula-by-formula), and one can easily check that the corresponding formula $\phi^R$ of $\phi$ in the randomization is $NIP$ if $\phi$ is $NIP$. See also \cite{KP-random}.) By Theorem~\ref{NIP definability} above, every $\phi^R$-type in $T^R$ is Baire-1/2 definable, and so its corresponding measure in $T$ is Baire-1/2 definable. This will be discussed in detail in a future work.
\end{Remark}

\bigskip\noindent
{\bf History.} In \cite{HPP} and \cite{HP}, the Borel definability of coheirs and the strongly Borel definability of invariant types in $NIP$ theories were proved, respectively. The notion ``$NIP$ in a model" was introduced in \cite{K3} and some variants of definability of types and coheirs were proved. The correspondence between Shelah's theorem (in classical logic) and the Eberlein--\v{S}mulian theorem was also pointed out in \cite{K3}. This approach was later followed in \cite{K-definable} and \cite{KP}, and more applications for the notion ``$NIP$ in a model" were presented. In \cite{K-Baire}, the usefulness of this approach for the study of model theoretic properties of ``theories" (in classical logic) was demonstrated. In the present paper, this utility is extended to continuous logic, and new results are presented for both classical logic and continuous logic. In the next section, we examine the effectiveness of this approach in the classification of ``continuous theories."

\section{Discussion and Thesis}  \label{discussion}
In this section, we argue why there is no result similar to Shelah's theorem for classical logic in Banach space theory; equivalently, there is no such theorem in real-valued function theory or continuous logic. We explain the strategy for finding a counterexample and give a new classification of continuous theories in terms of function spaces. Although the following is mainly expository, it is (in our view) very illuminating. (This section can't be read without a firm grasp of \cite[Section~3]{K-Baire}.)

\subsection{Discussion}
In \cite{K5}, a `weak' form of Shelah's theorem for continuous logic was proved. Although, as mentioned above, we argue that the exact form of Shelah's theorem ($OP$ $\Leftrightarrow$ $IP$ or $SOP$) does not hold in continuous logic. The reader can compare the proofs of \cite[Proposition~1.9]{K5} and Theorem~\ref{SOP-main} above to see where the key difference lies. In fact, the notion $SCP$ (or $NwSOP$) in \cite{K5} is strictly stronger than the notion $SOP$ (cf. Definition~\ref{SOP-definition} above), and the usual argument of Shelah's theorem does work with the latter notion. On the other hand, given the above, the following table is available:

$$
\begin{tabular}{|c|}
  \hline
  Continuous Logic \\
  \hline
\end{tabular}
$$
$$
\begin{tabular}{|c|c|}
  \hline
  Stability & $C(X)$  \\
  \hline
  $NIP$ & $B_1(X)\setminus B_{1/2}(X)=\emptyset$  \\
  \hline
  $NSOP$ & $DBSC(X)\setminus C(X)=\emptyset$  \\
  \hline
\end{tabular}
$$
(Recall that the correspondence between stability and $C(X)$ follows from the Eberlein--\v{S}mulian Theorem. The second and third correspondences follow from Theorems~\ref{NIP=Baire-1/2} and \ref{SOP characterization}, respectively.)

\medskip
In the classical case, from \cite{K-Baire}, we have the following table:

$$
\begin{tabular}{|c|}
  \hline
  Classical Logic \\
  \hline
\end{tabular}
$$
$$
\begin{tabular}{|c|c|}
  \hline
  Stability & $C(X)$  \\
  \hline
  $NIP$ & $B_1(X)\setminus DBSC(X)=\emptyset$  \\
  \hline
  $NSOP$ & $DBSC(X)\setminus C(X)=\emptyset$  \\
  \hline
\end{tabular}
$$
(The correspondences in the classical case follow from the Eberlein--\v{S}mulian Theorem, Propositions~2.13 and Remark~2.10 of \cite{K-Baire}, respectively. See also Section~3 of \cite{K-Baire}, especially Remark~3.4.)

\medskip
Recall from Fact~\ref{properties}(iv) that for simple functions, $B_{1/2}(X) = DBSC(X)$. Of course, as mentioned in Fact~\ref{properties}(iii), $B_{1/2}(X) \neq DBSC(X)$ in real-valued functions, i.e., the continuous case. We argue that this is the reason why Shelah's theorem holds in classical logic and no one should expect something similar to be true in continuous logic. To complete the discussion, we recall some results from function theory and Banach space theory.

The main result of \cite{HOR}, i.e., Theorem~B, is very similar to Shelah's theorem, but for the same reason, it is not a perfect analog of the latter. Here we explain this similarity. First, we recall two notions from Banach space theory. Let $(x_n)$ be a seminormalized basic sequence in a Banach space $Y$. A basic sequence $(e_n)$ is said to be a {\em spreading model} of $(x_n)$ (or $Y$) if for all natural numbers $k$ and $\epsilon > 0$, there exists $N$ such that for any $N < n_1 < \cdots < n_k$ and $(r_i)_1^k \subseteq \mathbb{R}$ with $\sup_i |r_i| \leq 1$, we have
$$
\left|\left\|\sum_{i=1}^k r_i x_{n_i}\right\| - \left\|\sum_{i=1}^k r_i e_i\right\|\right| < \epsilon.
$$
Roughly speaking, $(e_n)$ is a {\em Morley sequence} in an elementary extension of $Y$ in the sense of model theory. Finally, recall that a sequence $(g_n)$ in a Banach space is a {\em convex block subsequence} of $(f_n)$ if $g_n = \sum_{i=p_n+1}^{p_{n+1}} r_i f_i$, where $(p_n)$ is an increasing sequence of integers, $(r_i) \in \mathbb{R}^+$, and for each $n$, we have $\sum_{i=p_n+1}^{p_{n+1}} r_i = 1$. Again, roughly speaking, a convex block subsequence of $(f_n)$ can be considered as a sequence of Boolean combinations of the instances of $f_n$. In the following theorem, we can assume that $X$ is the type space $S_{\tilde\phi}(M)$ where $\phi(x,y)$ is a formula, $M$ is a separable model of a continuous theory, and $(f_n)$ is a sequence of the form $(\phi(a_n,y): n < \omega)$ where $a_n \in M$ and $\phi(a_n, y)$ is a continuous function on $X$ as previously defined.

\medskip
\noindent{\bf Theorem~$\mathcal{B}$:} Let $X$ be a compact metric space, $f \in B_1(X) \setminus C(X)$, and $(f_n)$ be a uniformly bounded sequence in $C(X)$ which converges to $f$.

\begin{itemize}
  \item [(a)]
    \begin{itemize}
      \item [(i)] If $f \notin B_{1/2}(X)$, then $(f_n)$ has a subsequence whose spreading model is equivalent to the unit vector basis of $\ell_1$.
      \item [(ii)] If every convex block basis of $(f_n)$ has a spreading model equivalent to the unit vector basis of $\ell_1$, then $f \notin B_{1/2}(X)$.
    \end{itemize}
  \item [(b)]
    \begin{itemize}
      \item [(i)] If $f \in B_{1/4}(X)$, then some convex block basis of $(f_n)$ has a spreading model equivalent to the summing basis of $c_0$.
      \item [(ii)] If $(f_n)$ has a spreading model equivalent to the summing basis of $c_0$, then $f \in B_{1/4}(X)$.
    \end{itemize}
\end{itemize}

\medskip
\noindent {\em Explanation.} Theorem~$\mathcal{B}$(a) and Theorem~$\mathcal{B}$(b) are Theorems~II.3.5 and II.3.6 of \cite[Chapter~23]{Rosenthal-handbook}. Theorem~$\mathcal{B}$(a)(i), Theorem~$\mathcal{B}$(a)(ii), and Theorem~$\mathcal{B}$(b)(i) follow from \cite[Theorem~B(a)]{HOR}, \cite[Theorems~3.7]{HOR}, and \cite[Theorem~B(b)]{HOR}, respectively. Theorem~$\mathcal{B}$(b)(ii) is due to V. Farmaki \cite[Theorem~1.1]{Farmaki} answering an open question raised in \cite{HOR}.

\medskip
Notice that Theorem~$\mathcal{B}$(a)(ii) is a converse to Theorem~$\mathcal{B}$(a)(i) and Theorem~$\mathcal{B}$(b)(ii) is a converse to Theorem~$\mathcal{B}$(b)(i). Theorem~$\mathcal{B}$(a) should be compared with Theorem~\ref{NIP=Baire-1/2}(v) above. In other words, it corresponds to the independence property for a {\em theory}. On the other hand, Theorem~$\mathcal{B}$(b) should be compared with Proposition~\ref{SOP->c0} above. Therefore, it is related to the strict order property for a {\em theory}. The reason that Theorem~$\mathcal{B}$(a) and Theorem~$\mathcal{B}$(b) cannot interact and join together to form a single theorem similar to Shelah's theorem (or even the Eberlein--\v{S}mulian Theorem) is that $B_{1/4}(X) \neq B_{1/2}(X)$ in general (cf. Fact~\ref{properties}(iii)). (Moreover, recall from \cite{OS} that there exists a separable Banach space $X$ so that no spreading model of $X$ contains $c_0$ or $\ell_p$ ($1 \leq p < \infty$).) This is precisely the reason why one should not expect the exact form of Shelah's theorem to be established for continuous logic. This is not the case in classical logic because for $\{0,1\}$-valued (even simple) functions, $DBSC(X) = B_{1/4}(X) = B_{1/2}(X)$. It seems that the {\em exotic} (or {\em unnatural}) examples in Banach space theory, those that do not contain an infinite-dimensional reflexive subspace or an isomorph of $\ell_1$ or $c_0$, come from this point.

\medskip
In the following, we argue that finding a counterexample to a perfect analog of Shelah's theorem for continuous logic is related to a specific type of Banach spaces. In \cite[Section~4.2]{K4}, we assigned a Banach space $V_{\phi,M}$ to each formula $\phi$ and model $M$ as follows. Let $T$ be a (continuous) theory, $M$ a model of it, $\phi(x,y)$ a formula, and $X = S_{\tilde\phi}(M)$ the space of complete $\tilde\phi$-types over  $M$. The {\em space of linear $\tilde\phi$-definable relations on $M$}, denoted by $V_{\phi,M}$, is the closed subspace of $C(X)$ generated by the set $\{\phi(a,y): X \to [0,1] \mid a \in M\}$. (Notice that each $\phi(a,y): X \to [0,1]$, with $a \in M$, is continuous and $V_{\phi,M}$ is a Banach space with the sup-norm.) It is easy to show that:
\begin{itemize}
  \item [(i)] A (continuous) theory is stable if and only if for each separable model $M$ and each formula $\phi$, the Banach space $V_{\phi,M}$ is reflexive (cf. \cite[Corollary~4.4]{K4}).
  \item [(ii)] A (continuous) theory has $NIP$ if and only if for each separable model $M$ and each formula $\phi$, the Banach space $V_{\phi,M}$ does not contain an isomorphic copy of $\ell_1$ (cf. \cite[Corollary~4.7]{K4}).
\end{itemize}
Our observation in the present paper (Proposition~\ref{SOP->c0}) shows that $SOP$ implies the existence of an isomorph of $c_0$ in some Banach space $V_{\phi,M}$, where $\phi$ is a formula and $M$ is a separable model. To summarize:

\begin{Proposition}
Let $T$ be a \textbf{continuous} theory. Then (i) implies (ii).
\begin{itemize}
  \item[(i)] (a) For some formula $\phi$ and some model $M$, the Banach space $V_{\phi,M}$ is nonreflexive, and (b) for each formula $\phi$ and each separable model $M$, the Banach space $V_{\phi,M}$ does not contain $\ell_1$ or $c_0$.
  \item[(ii)] $T$ is unstable but $NIP$ and $NSOP$.
\end{itemize}
\end{Proposition}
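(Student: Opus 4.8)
The plan is to read off the three assertions in (ii) directly from the three Banach-space characterizations assembled just before the statement, supplying the (routine but necessary) reductions from arbitrary models to separable ones. First, $NIP$ is immediate: hypothesis (i)(b) asserts that for every formula $\phi$ and every separable model $M$ the space $V_{\phi,M}$ contains no isomorph of $\ell_1$, which is exactly the right-hand side of the $\ell_1$-characterization of $NIP$ (\cite[Corollary~4.7]{K4}). Hence $T$ has $NIP$.

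Next, instability should follow from (i)(a) together with the reflexivity characterization of stability (\cite[Corollary~4.4]{K4}): $T$ is stable iff $V_{\phi,M}$ is reflexive for every formula $\phi$ and every separable $M$. The only gap is that (i)(a) produces a nonreflexive $V_{\phi,M}$ for a possibly \emph{nonseparable} $M$, so I would close this using the separable determination of reflexivity. A nonreflexive Banach space has a separable nonreflexive closed subspace $W$, and $W$ is the closed span of countably many generators $\phi(a_n,y)$, $a_n\in M$. Choosing a separable elementary submodel $M_0\preceq M$ containing all the $a_n$ and invoking the canonical isometric embedding $V_{\phi,M_0}\hookrightarrow V_{\phi,M}$ induced by restriction of $\tilde\phi$-types $S_{\tilde\phi}(M)\to S_{\tilde\phi}(M_0)$, one gets $W\subseteq V_{\phi,M_0}$, so $V_{\phi,M_0}$ is nonreflexive for a \emph{separable} $M_0$. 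By the contrapositive of \cite[Corollary~4.4]{K4}, $T$ is unstable.

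For $NSOP$ I would argue by contradiction. Suppose $T$ has $SOP$. By Proposition~\ref{SOP->c0} there are a formula $\psi$ and a sequence $(a_i)$ in $\mathcal U$ such that $(\psi(x,a_i):S_\psi(\mathcal U)\to[0,1])$ is equivalent to the summing basis of $c_0$; under the paper's identification these are the generators $\tilde\psi(a_i,y)$, so $c_0$ embeds into $V_{\tilde\psi,\mathcal U}$. To descend to a separable model I would pass to a separable $M_0\preceq\mathcal U$ containing the $a_i$ and observe that, since each $a_i\in M_0$, the function $\psi(x,a_i)$ factors through the surjective restriction map $S_\psi(\mathcal U)\to S_\psi(M_0)$; hence the sup-norm of every linear combination $\sum c_i\psi(x,a_i)$ is unchanged upon restriction, the restricted sequence is still equivalent to the summing basis of $c_0$, and $c_0$ embeds into $V_{\tilde\psi,M_0}$. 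This contradicts (i)(b) applied to the formula $\tilde\psi$ and the separable model $M_0$, which forbids a copy of $c_0$. Therefore $T$ is $NSOP$.

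I expect the purely combinatorial and model-theoretic steps to be transparent, so the main obstacle is the two separability reductions: the separable determination of nonreflexivity (so that (i)(a) forces a separable nonreflexive $V_{\phi,M_0}$), and the verification that restriction of types is sup-norm preserving on the relevant generators (so that the $c_0$-structure survives the passage from $\mathcal U$ to a separable $M_0$). Everything else is a direct application of the three cited characterizations.
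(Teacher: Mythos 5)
Your proposal is correct and follows essentially the same route as the paper, which presents this Proposition as an immediate summary of the three cited facts: the reflexivity characterization of stability (\cite[Corollary~4.4]{K4}), the $\ell_1$-characterization of $NIP$ (\cite[Corollary~4.7]{K4}), and Proposition~\ref{SOP->c0} giving a copy of $c_0$ under $SOP$. The paper gives no further argument, so your two separability reductions (separable determination of nonreflexivity, and norm preservation of the generators under the surjective restriction of type spaces) are welcome explicit details rather than a departure.
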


In case (i), for some formula $\phi$ and model $M$, $V_{\phi,M}$ is nonreflexive and does not contain an isomorph of $\ell_1$ or $c_0$. Recall that the first example of such a space was given in \cite{James} by Robert C. James. We conjecture that these types of spaces, and especially exotic Banach spaces, are the spaces whose theories are unstable but $NIP$ and $NSOP$. This conjecture is related to a question posed by Odell and Gowers on the existence of $c_0$ and $\ell_p$ ($1 \leq p < \infty$) in `explicitly defined' Banach spaces (see \cite[(Q1)]{K-Banach} and a discussion on {\em explicit definability} of norms therein). We believe that a {\em complete and enlightening} answer to the Odell--Gowers question (which is discussed in \cite{K-Banach}) is necessarily possible through the model-theoretic classification of continuous first-order theories.


\bigskip
Let us make more comparisons between the above theorems, namely Shelah's theorem for classical logic, the Eberlein--\v{S}mulian Theorem, Theorem~$\cal B$ above, and Theorems~A of \cite{HOR}. Roughly speaking, Theorem~A(a) of \cite[page~1]{HOR} asserts that a separable Banach space $X$ has an isomorphic copy of $\ell_1$ if and only if its bidual has a non-Baire-1 point. Theorem~A(b) asserts that $X$ has an isomorphic copy of $c_0$ if and only if its bidual has a non-continuous but $DBSC$ point.

Notice that Theorem~A and the Eberlein--\v{S}mulian Theorem study some properties inside a {\em model}, but Shelah's theorem and Theorem~$\cal B$ study properties of the {\em theory} of a model. Roughly (and possibly incorrectly speaking), assuming the compactness theorem of logic, Shelah's theorem and the Eberlein--\v{S}mulian Theorem are equivalent (cf. \cite[Section~3]{K-Baire}). Theorems~A and Theorem~$\cal B$ are not as perfect as the other two theorems, because of the gaps $DBSC \neq B_{1/4} \neq B_{1/2} \neq B_1$. Again, we point out that Shelah's theorem (for classical logic) works because $DBSC = B_{1/2}$ for simple functions, and the Eberlein--\v{S}mulian Theorem works because relative sequential compactness and sequential completeness imply relative compactness, equivalently $B_1 = B_1$ in our notation!

\subsection{Thesis}
The above results and the observations in \cite[Section~3]{K-Baire} suggest a new classification of continuous first-order theories. Indeed, let $X$ be a compact metric space and ${\mathcal C}(X)$ be some subclass of $B_1(X)$, containing $B_{1/2}(X)$. We say that a theory $T$ has the $\mathcal C$-property if

\medskip
\ \ \ ``For any formula $\phi(x,y)$ and any infinite sequence $(a_i : i < \omega)$, \textbf{if} the sequence $(\phi(a_i, y): S_{\tilde\phi}(\{a_i : i < \omega\}) \to [0,1])$ converges to a function in ${\mathcal C}(S_{\tilde\phi}(\{a_i : i < \omega\}))$, \textbf{then} there is no infinite sequence $(b_i : i < \omega)$ such that $(a_i),(b_i)$ witness the order property for $\phi(x,y)$."
\medskip

Note that the above definition is conditional, meaning that if ${\cal C}_1$ is a subclass of ${\cal C}_2$, then the ${\cal C}_1$-property is weaker than the ${\cal C}_2$-property.

Now we can provide several Shelah-like theorems.

\begin{Proposition} \label{Shelah-like2}
Let $T$ be a complete (continuous) theory and $\mathcal C$ as above. Then $T$ is stable if and only if it has both $NIP$ and the $\mathcal C$-property.
\end{Proposition}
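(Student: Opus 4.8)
The plan is to prove the two implications separately, exploiting that the order property is the sole obstruction to stability (Grothendieck's criterion, Fact~\ref{EG}) and that $NIP$ forces the pointwise limits of instances of a formula to be no worse than Baire-$1/2$ (Theorem~\ref{NIP=Baire-1/2}(iv)), hence to land inside $\mathcal C$ by the standing hypothesis $B_{1/2}\subseteq\mathcal C$.

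First the forward direction. Suppose $T$ is stable, equivalently (Fact~\ref{EG}) that for no formula $\phi(x,y)$ and no sequences $(a_i),(b_i)$ do they witness the order property. Then $NIP$ is immediate: if some $\phi$ had $IP$, then choosing the independence witnesses $b_k$ corresponding to the initial segments $E=\{1,\dots,k\}$ would already yield a half-graph, i.e.\ $\phi(a_i,b_k)\le r$ iff $i\le k$, which is the order property, contradicting stability. The $\mathcal C$-property is then vacuous: its conclusion --- the nonexistence of an order-property witness $(b_i)$ for the given $(a_i)$ --- holds for \emph{every} $\phi$ and every $(a_i)$, so the conditional is satisfied no matter whether the limit hypothesis is met.

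For the converse I would argue by contraposition. Assume $T$ has $NIP$ and the $\mathcal C$-property but is unstable; then some formula (possibly after replacing $\phi$ by $\tilde\phi$ to fix the orientation) admits sequences $(a_i),(b_i)$ and reals $r<s$ witnessing the order property in the sense of Definition~\ref{uniformly blocked}(ii) with the $a_i$ in the first coordinate, i.e.\ $\phi(a_j,b_i)\le r$ and $\phi(a_i,b_j)\ge s$ for $i<j$. Since $T$ has $NIP$, in particular $\phi$ has $NIP$, so Theorem~\ref{NIP=Baire-1/2}(iv) applied to $(a_i)$ produces a subsequence $(a_{j_i})$ for which $\big(\phi(a_{j_i},y)\big)$ converges pointwise to a Baire-$1/2$ function $f$. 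Passing to the matching subsequence $(b_{j_i})$ leaves the defining inequalities intact, so $(a_{j_i}),(b_{j_i})$ still witness the order property. But $f\in B_{1/2}\subseteq\mathcal C$, so the hypothesis of the $\mathcal C$-property is satisfied while its conclusion fails --- a contradiction. Hence $T$ is stable.

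The hard part will be the bookkeeping about \emph{which} compact type space the limit lives on: both Theorem~\ref{NIP=Baire-1/2}(iv) and the $\mathcal C$-property must be read on the compact metric space $X=S_{\tilde\phi}(\{a_{j_i}:i<\omega\})$ attached to the (countable) subsequence, rather than on the unwieldy $S_{\tilde\phi}(\mathcal U)$, on which ``Baire-$1/2$'' is not even defined in Definition~\ref{subclasses}. This is legitimate because $NIP$ for the theory localizes: no $IP$-configuration over the parameters $\{a_{j_i}\}$ can be found even with $y$ ranging through the monster, so the Haydon--Odell--Rosenthal ordinal-index computation underlying Theorem~\ref{NIP=Baire-1/2}(iv) may be performed on this separable type space and the resulting $f$ is genuinely Baire-$1/2$ there. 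One should also record the trivial but necessary point that the order-property inequalities, being imposed coordinatewise for $i<j$, survive restriction to an arbitrary subsequence; this is exactly what allows the two halves of the argument to refer to one and the same reindexed sequence.
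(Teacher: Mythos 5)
Your proposal is correct and follows essentially the same route as the paper's proof: the forward direction is immediate, and the converse combines $NIP$ via Theorem~\ref{NIP=Baire-1/2}(iv) to extract a subsequence with a Baire-$1/2$ (hence $\mathcal C$) limit, the $\mathcal C$-property to exclude order-property witnesses for that subsequence, and Grothendieck's criterion to conclude stability. The only cosmetic difference is that you phrase the converse contrapositively (starting from an order-property witness and deriving a contradiction), whereas the paper argues directly and packages the conclusion with the Eberlein--\v{S}mulian theorem; your added remarks on the orientation of the order property and on reading the limit on the separable type space $S_{\tilde\phi}(\{a_{j_i}\})$ are sound and, if anything, more careful than the original.
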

\begin{proof}
Clearly, stability implies $NIP$. Also, by Grothendieck's criterion,  stability implies the $\mathcal C$-property, that is, the limit of every convergent sequence is continuous.  For the converse, by $NIP$, for any $\phi(x,y)$ and any $(a_i)$, there is a subsequence $(a_{i_j})$ such that $\phi(a_{i_j}, y)$ converges to a function $f$ which is $B_{1/2}$. As $B_{1/2} \subseteq \mathcal C$, by the $\mathcal C$-property, there is no infinite sequence $(b_i)$ such that $(a_{i_j}),(b_i)$ witness the order property for $\phi(x,y)$. By Grothendieck's criterion, $f$ is continuous. As $\phi$ and $(a_i)$ are arbitrary, by the Eberlein--\v{S}mulian Theorem and Grothendieck's criterion, $T$ is stable.
\end{proof}

Kechris and Louveau \cite{KL} studied various ordinal ranks of bounded Baire-1 functions. They introduced the classes of bounded Baire-1 functions ${\mathcal B}_1^\xi$ (for $\xi$ a countable ordinal). It is shown that (i) ${\mathcal B}_1^1=B_{1/2}$, (ii) ${\mathcal B}_1^\xi$ is a Banach subalgebra of Baire-1 functions (with the sup-norm), and (iii) ${\mathcal B}_1^\xi \subseteq {\mathcal B}_1^\zeta$ for countable ordinals $\xi \leq \zeta < \omega_1$. Therefore, by Proposition~\ref{Shelah-like2}, we have the following:

\begin{Corollary}
Let $T$ be a complete theory and $\xi$ a countable ordinal. Then $T$ is stable if and only if it has both $NIP$ and the ${\mathcal B}_1^\xi$-property.
\end{Corollary}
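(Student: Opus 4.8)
The plan is to recognize that the Corollary is nothing more than Proposition~\ref{Shelah-like2} specialized to the choice ${\cal C} = {\cal B}_1^\xi$. So the entire task reduces to verifying that, for each countable ordinal $\xi$ with $1\le\xi<\omega_1$, the class ${\cal B}_1^\xi(X)$ is an admissible $\cal C$ in the precise sense demanded there: a subclass of $B_1(X)$ that contains $B_{1/2}(X)$. Once this membership check is done, I would simply invoke Proposition~\ref{Shelah-like2} and be finished.

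The containment ${\cal B}_1^\xi(X)\subseteq B_1(X)$ is immediate, since by construction the ${\cal B}_1^\xi$ are classes of bounded Baire-1 functions. The reverse inclusion $B_{1/2}(X)\subseteq{\cal B}_1^\xi(X)$ carries all the content, and I would extract it from the three listed Kechris--Louveau facts together with the definition of $B_{1/2}$. First, by (i) one has $DBSC(X)={\cal B}_1^1(X)$, and by the monotonicity (iii) this yields $DBSC(X)={\cal B}_1^1(X)\subseteq{\cal B}_1^\xi(X)$ for every $\xi$ with $1\le\xi<\omega_1$. Second, by (ii) the class ${\cal B}_1^\xi(X)$ is a Banach algebra under the supremum norm, so in particular it is complete and hence closed in the supremum norm. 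Finally, recall from Definition~\ref{subclasses}(iv) that $B_{1/2}(X)$ is exactly the set of uniform (supremum-norm) limits of sequences from $DBSC(X)$, i.e.\ the supremum-norm closure of $DBSC(X)$. Putting these together: ${\cal B}_1^\xi(X)$ is a supremum-norm-closed set containing $DBSC(X)$, so it contains the supremum-norm closure of $DBSC(X)$, which is precisely $B_{1/2}(X)$.

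With the chain $B_{1/2}(X)\subseteq{\cal B}_1^\xi(X)\subseteq B_1(X)$ established, the hypotheses of Proposition~\ref{Shelah-like2} are satisfied by ${\cal C}={\cal B}_1^\xi$, and the stated equivalence --- $T$ is stable if and only if it has both $NIP$ and the ${\cal B}_1^\xi$-property --- follows immediately.

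I do not anticipate a serious obstacle here; the proof is essentially a bookkeeping verification feeding into Proposition~\ref{Shelah-like2}. The one point that genuinely needs care is the passage from ``Banach algebra under the sup-norm'' to ``sup-norm closed,'' since it is exactly this closedness that forces the sup-norm closure $B_{1/2}(X)$ of $DBSC(X)$ to lie inside ${\cal B}_1^\xi(X)$. I would also flag the degenerate case explicitly: the argument anchors on ${\cal B}_1^1=DBSC$ and therefore uses $\xi\ge1$, which matches the intended range of the statement.
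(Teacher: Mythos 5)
Your proposal is correct and takes essentially the same route as the paper, which likewise obtains the Corollary by feeding ${\mathcal C}={\mathcal B}_1^\xi$ into Proposition~\ref{Shelah-like2} on the strength of the three quoted Kechris--Louveau facts. You simply make explicit the containment check $B_{1/2}(X)\subseteq{\mathcal B}_1^\xi(X)\subseteq B_1(X)$ (via ${\mathcal B}_1^1=DBSC$, monotonicity in $\xi$, and sup-norm closedness) that the paper leaves implicit.
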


In Section~3 of \cite{KL}, there is a classification of functions between $DBSC$ and Baire-1/2. Recall from Fact~\ref{properties}(iv) that every $\{0,1\}$-valued Baire-1/2 function is $DBSC$. This explains why one can provide a better classification for {\em continuous} first-order theories that is not possible for the classical case.

\medskip
At the end of the paper, let us discuss what it means to have a dividing line in continuous logic and Banach space theory. In \cite[Chapter~2.3]{Baldwin-book}, Baldwin distinguishes between a virtuous property of a theory and a dividing line in classical logic. A property is {\em virtuous} if it has significant mathematical consequences for the theory or its models. A property is a {\em dividing line} if it and its negation are both virtuous. According to these definitions, by Theorems~\ref{NIP=Baire-1/2} and \ref{NIP definability}, since $NIP$ and its negation give us a lot of information about the theory/models, $NIP$ is a dividing line in continuous logic as well as in Banach space theory. Indeed, $NIP$ is equivalent to Baire-1/2 definability of coheirs, and its negation ($IP$) is equivalent to the existence of $\ell_1$ in the function space on types. In fact, such a dividing line already exists in Banach space theory, as seen in Rosenthal's $\ell_1$-theorem. These theorems provide the kind of control that was needed; that is, control on both sides of the dividing line. Another example of such a theorem in Banach space theory is Gowers' theorem \cite{Gowers2}: Every Banach space $X$ has a subspace $Y$ that either has an unconditional basis or is hereditarily indecomposable. As Gowers argues in \cite{Gowers1}: ``{\em the questions that were traditionally asked about nice subspaces [i.e., existence of an infinite-dimensional reflexive subspace or an isomorph of $\ell_1$ or $c_0$] were of the wrong kind (although more examples of Banach spaces were needed to make this clear).}" As $NIP$ corresponds to Rosenthal's $\ell_1$ theorem, several questions arise. Is there a model-theoretic property corresponding to Gowers' theorem? Are there model-theoretic dividing lines that separate {\em natural} and {\em exotic} Banach spaces? What are the connections between the known classification (in classical logic) and the classification presented in this article? These and many other questions are worth exploring.

Finally, the above points strongly inspire us to believe that Shelah's {\em dividing lines} and classification theory is the {\em correct} approach to many questions in various areas of mathematics.

\bigskip\noindent
 {\bf Acknowledgements.}
  I am very much indebted to Professor John T. Baldwin
for his kindness and his helpful comments.  I thank James Hanson for informing me that he has studied $SOP$-like concepts in continuous logic in his thesis. 
I would like to thank the anonymous referee for the valuable comments and corrections that have significantly improved the quality of this paper.

I would like to thank School of Mathematics,  Institute for Research in Fundamental Sciences  (IPM), P. O. Box 19395-5746, Tehran, Iran, for their support. This research was in part supported by a grant from IPM (No. 1403030025).

\vspace{10mm}


\end{document}